\documentclass[11pt]{scrartcl}

\usepackage{amssymb,amsmath,amsthm,amsrefs,braket,authblk}
\usepackage[dvipdfmx]{graphicx}

\newtheorem{Theorem}{\textbf{Theorem}}[section]
\newtheorem{Lemma}[Theorem]{\textbf{Lemma}} 
\newtheorem{Proposition}[Theorem]{\textbf{Proposition}} 
\newtheorem{Remark}[Theorem]{\textbf{Remark}}
\newtheorem{Example}[Theorem]{\textbf{Example}}

\newenvironment{thm}{\begin{Theorem}}{\end{Theorem}}
\newenvironment{lem}{\begin{Lemma}}{\end{Lemma}}
\newenvironment{prop}{\begin{Proposition}}{\end{Proposition}}
\newenvironment{rem}{\begin{Remark}}{\end{Remark}}
\newenvironment{example}{\begin{Example}}{\end{Example}}

\makeatletter
    
    \@addtoreset{equation}{section}
  \makeatother

\newcommand{\RR}{\mathbb{R}}
\newcommand{\al}{\alpha}
\newcommand{\ga}{\gamma}
\newcommand{\ep}{\varepsilon}
\newcommand{\ran}{\rangle}
\newcommand{\lan}{\langle}

\title{Stability conditions of an ODE arising in human motion and its numerical simulation}  
\author[1]{Takahiro Kosugi\thanks{T. Kosugi is supported by JSPS KAKENHI Grant Number JP18K13436 and MEXT-Supported Program for the Strategic Research Foundation at Private Universities, Japan.
}}
\author[1]{Hitoshi Kino}
\author[1]{Masaaki Goto}
\author[2]{\\ Yuki Matsutani}

\affil[1]{\small Department of Intelligent Mechanical Engineering, Faculty of Engineering,

Fukuoka Institute of Technology, Fukuoka 811-0295, Japan}

\affil[2]{\small Department of Robotics, Faculty of Engineering, 

Kindai University, Higashi-Hiroshima 739-2116, Japan}

\date{}

\begin{document}
\maketitle
\begin{abstract}
This paper discusses the stability of an equilibrium point of an ordinary differential equation (ODE) arising from a feed-forward position control for a musculoskeletal system. The studied system has a link, a joint and two muscles with routing points.
The motion convergence of the system strongly depends on the muscular arrangement of the musculoskeletal system. In this paper, a sufficient condition for asymptotic stability is obtained. Furthermore, numerical simulations of the penalized ODE and experimental results are described. 
\end{abstract}

{\small Key Words and Phrases: Stability condition,  Musculoskeletal system with routing points, Numerical simulation, Experimental result}

\tableofcontents

\section{Introduction}

The mechanism of human motion is expected to be applied to robotics.
Some hypotheses such as ``the equilibrium point hypothesis'' \cite{Feldman, EP_science} and ``the virtual trajectory hypothesis'' \cite{Hogan}  suggest that human motion generation efficiently utilizes a feed-forward position control.
In addition, rapid motion without  sensory feedback, such as a finger flick, can be controlled to some extent. 
Therefore, it is assumed that a human musculoskeletal system satisfies the controllability condition for a feed-forward control.
We would like to interpret that assumption in both a mathematical and engineering sense.

The approach of our feed-forward control is to keep muscular tensions balanced at a target position.
In 2013, Kino et al. \cite{KKMTN13} studied the feed-forward control for a 2-link-6-muscle musculoskeletal system, which is modeled after a human arm.
They gave an engineering consideration and showed their numerical simulations.
We also refer to Kino et al. \cite{KOMT17} for a mathematically sufficient condition of the same system as \cite{KKMTN13} for feed-forward control. 
However, the system is considered without some characteristics.
For instance, each muscle of the system is arranged as straight, even though complicated arrangements actually exist around the joints.

In this paper,  we consider a musculoskeletal system  (Figure \ref{fig:target_system}) that has a link, a joint and two muscles. 
This system is modeled after a human finger.  
The name of each part is defined according to Figure \ref{fig:target_system}. 
We also use the same symbols for the lengths. 
Base $L_0$ is always fixed. 
Link $L_1$ rotates on the joint in the two-dimensional plane. 
The red line and the blue line in Figure \ref{fig:target_system} are muscles 
and are called Muscle 1 and Muscle 2, respectively.
These muscles have routing points $P_{12}$ and $P_{22}$. Generally, human muscles are restricted by the tendons and curve with the rotation of the links. 
For simplification, in this system, we assume that Muscle 1 and Muscle 2 bend at routing points $P_{12}$ and $P_{22}$  (Figure \ref{fig:routingpoint}), which are the lengths $\ell_1$ and $\ell_2$ away from the joint, respectively. 
We call $\ell_1$ and $\ell_2$ virtual links (Figure \ref{fig:virtuallink}), although the system primarily has the unique link, Link $L_1$.  
We let the virtual links $\ell_1$ and $\ell_2$ rotate by half of the rotation angle of Link $L_1$. 
We suppose that friction of the joint 
comprises only viscosity friction with a viscosity friction coefficient of $\mu>0$ and that the system ignores a viscoelasticity of muscles.
Muscles are imaged to resemble wires.
We also ignore gravity effects.
In this case, we are concerned with a sufficient condition for the above feed-forward control of the system.

\begin{figure}[h!]
	\centering
		\includegraphics[keepaspectratio=true,height=54mm]{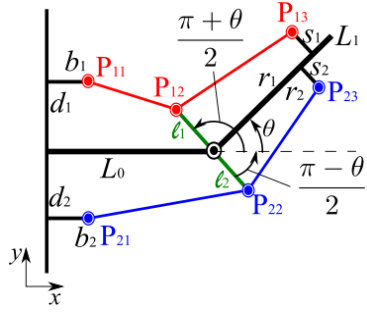}\vspace{-1.0mm}
		\caption{1-link-2-muscle musculoskeletal system with routing points}
		\label{fig:target_system}
\end{figure}
\begin{figure}[h!]
\begin{tabular}{c}
 \begin{minipage}{0.5\hsize}
  \begin{center}
   \includegraphics[
scale=0.4
   ]{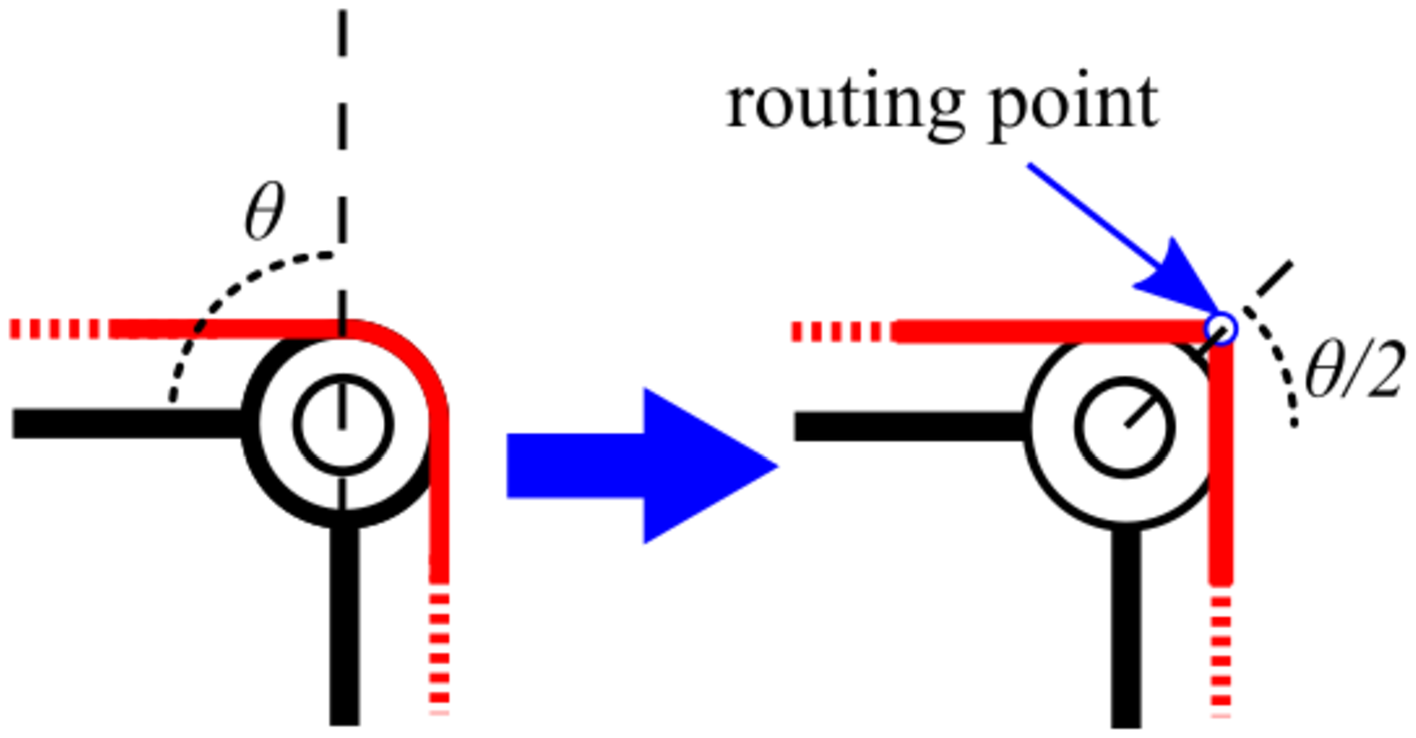}
  \end{center}
  \caption{Routing point}
  \label{fig:routingpoint}
 \end{minipage}
 \begin{minipage}{0.5\hsize}
  \begin{center}
   \includegraphics[
scale=0.5
   ]{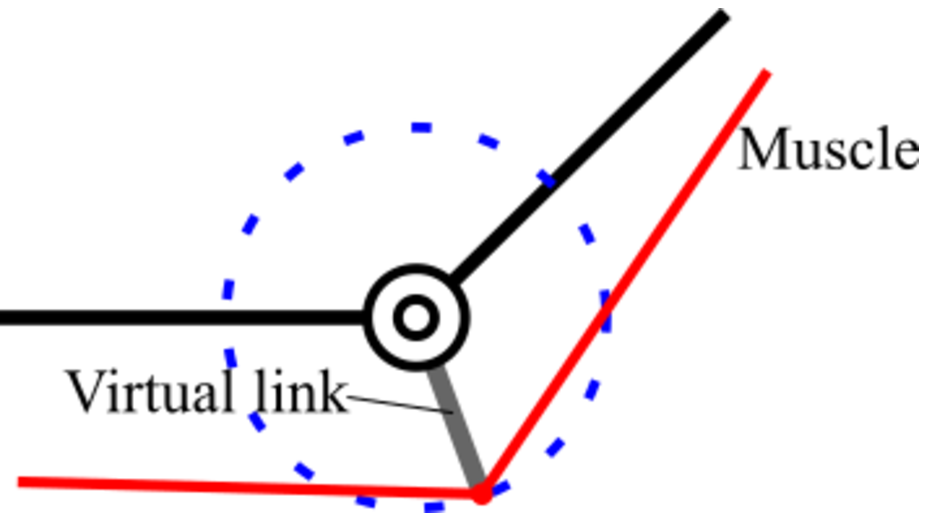}
  \end{center}
  \caption{Virtual link}
  \label{fig:virtuallink}
 \end{minipage}
 \end{tabular}
\end{figure}
To this end, as a mathematical problem, this paper discusses a sufficient condition for asymptotic stability of a equilibrium point of the dynamics
\begin{equation}\label{introeq}
I\ddot\theta(t) +\mu\dot\theta(t)-\tau(\theta(t))=0,\quad t>0,
\end{equation}
where $\theta:(0,\infty)\to (-\theta_0,\pi)$ is an unknown function. 
Here, a constant $\theta_0>0$ is small enough. A constant $I$, a constant $\mu$ and a function $\tau:(-\theta_0,\pi)\to\RR$ are given. 
$I$ is the moment of inertia, $\mu$ is the viscosity coefficient of the joint, and $\tau$ is the torque of the link generated by constant muscular tensions balancing at the target position $\theta=\theta_d$. 

We give a derivation of $\tau$. Let $F_1$ and $F_2$ be constant muscular tensions of Muscle 1 and Muscle 2, respectively. 
Let $q_{ij}=q_{ij}(\theta):\RR\to\RR$ ($i,j=1,2$) be the length of $P_{ij}P_{i(j+1)}$ as shown in Figure \ref{fig:target_system}.
They are given by
\[
\begin{split}
&q_{11}(\theta)=\left\{\left(L_0-b_1-\ell_1\sin\dfrac{\theta}{2}\right)^2+\left(d_1-\ell_1\cos\dfrac{\theta}{2}\right)^2\right\}^{1/2},\\
&q_{12}(\theta)=\left\{\left(r_1-\ell_1\sin\dfrac{\theta}{2}\right)^2+\left(s_1-\ell_1\cos\dfrac{\theta}{2}\right)^2\right\}^{1/2},\\
&q_{21}(\theta)=\left\{\left(L_0-b_2+\ell_2\cos\left(\pi+\dfrac{\pi+\theta}{2}\right)\right)^2+\left(d_2+\ell_2\sin\left(\pi+\dfrac{\pi+\theta}{2}\right)\right)^2\right\}^{1/2},\\
&q_{22}(\theta)=\left\{\left(r_2+\ell_2\cos\dfrac{\pi-\theta}{2}\right)^2+\left(s_2-\ell_2\sin\dfrac{\pi-\theta}{2}\right)^2\right\}^{1/2},
\end{split}
\]
where parameters $L_0,b_1,b_2,d_1,d_2,\ell_1,\ell_2,r_1,r_2,s_1$ and $s_2$ are positive constants.
Here, $q_{i}:=q_{i1}+q_{i2}$ denotes the length of Muscle $i$ for $i=1,2$. 
By the principle of virtual work, we have
\[
\tau(\theta)=-\left\lan J(\theta),\begin{pmatrix}F_1\\ F_2\end{pmatrix}\right\ran.
\]
Here, $\lan\cdot,\cdot\ran:\RR^n\times\RR^n\to\RR$ denotes the Euclidean inner product. 
It follows that
\[
\RR^2\ni F:=\begin{pmatrix}F_1\\ F_2\end{pmatrix} =-\tau(\theta)J(\theta)^*+v(\theta),
\]
where  $q:=(q_1,q_2)$, $J(\theta):=\dfrac{dq}{d\theta}(\theta)$ and
\[
J(\theta)^*:=
\begin{cases}
\dfrac{J(\theta)}{|J(\theta)|^2}\quad &\mbox{if }J(\theta)\neq0,\\
0&\mbox{if }J(\theta)=0.
\end{cases}
\]
The second term $v(\theta)$ of the right-hand side belongs to $\Set{v\in\RR^2 | \lan J(\theta), v\ran =0}$, namely, 
\[
v(\theta)=k\begin{pmatrix} \dfrac{dq_2}{d\theta}(\theta)\\[10pt] -\dfrac{dq_1}{d\theta}(\theta)\end{pmatrix} \quad\mbox{for any } k\in\RR.
\]
Here, the first term $-\tau(\theta)J(\theta)^*$ denotes a force to rotate the link $L_1$, that is, a driving force at $\theta$. 
The second term $v(\theta)$ denotes an orthogonal force to the rotation of $L_1$, that is, an inner force at $\theta$. 
For instance, $F=v(\theta_d)$ is an internal force vector of Muscle 1 and Muscle 2 balancing at $\theta=\theta_d$.
Since our feed-forward control is to take $F\equiv v(\theta_d)$, the torque $\tau(\theta)$ is given by
\[
\tau(\theta)=-\left\lan \dfrac{dq}{d\theta}(\theta),v(\theta_d) \right\ran
=-\left\lan \begin{pmatrix} \dfrac{dq_1}{d\theta}(\theta)\\[10pt] \dfrac{dq_2}{d\theta}(\theta)\end{pmatrix},
k\begin{pmatrix} \dfrac{dq_2}{d\theta}(\theta_d)\\[10pt] -\dfrac{dq_1}{d\theta}(\theta_d)\end{pmatrix}\right\ran
\]
for some $k\in\RR$. The dynamics of \eqref{introeq} becomes the ordinary differential equation 
\begin{equation}\label{meq}
I\ddot\theta +\mu\dot\theta+\left\lan \dfrac{dq}{d\theta}(\theta),v(\theta_d) \right\ran=0,\quad t>0.
\end{equation}
We note that the solution, $\theta=\theta_d$, is an equilibrium point of \eqref{meq}.

Our aim in this paper is to give conditions for parameters $L_0$, $b_1$, $b_2$, $d_1$, $d_2$, $\ell_1$, $\ell_2$, $r_1$, $r_2$, $s_1$ and $s_2$, and an equilibrium point $\theta_d$, for the asymptotic stability of $\theta=\theta_d$. 
Since each muscle of the 2-link-6-muscle musculoskeletal system in \cite{KOMT17} is straight, 
they show a sufficient condition by applying the Taylor expansion of the muscular lengths at some angle. 
In this paper, we use a different method because of a complicated $q_i$ by routing points.

This paper is organized in the following way.
In Section \ref{sec:preliminary}, we recall some known results. 
Sections \ref{sec:scond} and \ref{sec:proof} are devoted to showing a sufficient condition.
In Section \ref{sec:nande}, we show the numerical simulation results and experimental results.

\section{Preliminaries}\label{sec:preliminary}

We first recall Lyapunov's stability theorem, which corresponds to that of Theorem 1.30 in \cite{C}.
We also refer to \cite{A} for an application to the control of mechanical systems.

\begin{prop}[Theorem 1.30 in \cite{C}]\label{prop:Lyapunov}
Let $x_0$ be an equilibrium point of the autonomous ordinary differential equation
\begin{align}\label{eq:pre_ode}
\dot x(t)=g(x(t)), \quad t>0.
\end{align}
Let continuous function $V:U\to\RR$ be a Lyapunov function for \eqref{eq:pre_ode} at $x_0$, i.e., $V$ implies the following:
\begin{itemize}
\item $V(x_0)=0$;
\item $V(x)>0$ for $x\in U\setminus\{x_0\}$;
\item the function $x\mapsto \mathrm{grad}\, V(x)$ is continuous for $x\in U\setminus\{x_0\}$, and on this set, $\dfrac{d}{dt}(V(x(t)))\leq0$,
\end{itemize}
where $U\subset\RR^n$ is an open set. 
Then, $x_0$ is Lyapunov stable. 
In addition, if $V$ is a strict Lyapunov function, i.e., $\dfrac{d}{dt}(V(x(t)))<0$ for $x\in U\setminus\{x_0\}$, 
then $x_0$ is asymptotically stable.
\end{prop}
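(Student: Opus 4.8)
The plan is to reduce both assertions to the monotonicity of the map $t\mapsto V(x(t))$ along trajectories and to exploit the compactness of small spheres centred at $x_0$. First I would fix an arbitrary tolerance $\ep>0$ and choose a radius $r\in(0,\ep]$ small enough that the closed ball $\overline{B_r(x_0)}$ is contained in $U$. Since the sphere $S_r:=\{x\in\RR^n : |x-x_0|=r\}$ is compact and $V$ is continuous and strictly positive on $U\setminus\{x_0\}$, the quantity $m:=\min_{x\in S_r}V(x)$ is attained and satisfies $m>0$. Using the continuity of $V$ at $x_0$ together with $V(x_0)=0$, I would then select $\delta\in(0,r)$ with $V(x)<m$ whenever $|x-x_0|<\delta$.

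For Lyapunov stability I would take any solution with $|x(0)-x_0|<\delta$, so that $V(x(0))<m$. By the chain rule, $\dfrac{d}{dt}V(x(t))=\lan \mathrm{grad}\,V(x(t)),g(x(t))\ran\le0$, hence $t\mapsto V(x(t))$ is non-increasing and $V(x(t))\le V(x(0))<m$ as long as the trajectory stays in $U$. A first-exit argument then shows that the trajectory can never reach $S_r$: were $t_1$ the first time with $|x(t_1)-x_0|=r$, continuity would give $V(x(t_1))\ge m$, contradicting the bound above. Consequently $|x(t)-x_0|<r\le\ep$ for all $t>0$, which is precisely Lyapunov stability; note that this simultaneously confines the solution to the compact set $\overline{B_r(x_0)}\subset U$ and thereby secures its existence for all forward time.

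For asymptotic stability under the strict hypothesis I would argue that $V(x(t))$, being non-increasing and bounded below by $0$, converges to some limit $c\ge0$, and then show $c=0$. Suppose instead $c>0$. By continuity of $V$ and $V(x_0)=0$ there is $\eta\in(0,r)$ with $V(x)<c$ on the open ball $B_\eta(x_0)$, so the trajectory is trapped in the compact annular region $K:=\{x\in\RR^n : \eta\le|x-x_0|\le r\}$ for all $t\ge0$. On $K$ the function $x\mapsto\lan \mathrm{grad}\,V(x),g(x)\ran$ is continuous and strictly negative, hence attains a maximum $-\ga<0$; integrating $\dfrac{d}{dt}V(x(t))\le-\ga$ forces $V(x(t))\to-\infty$, contradicting $V\ge0$. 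Therefore $c=0$, and since $V$ is positive definite this yields $x(t)\to x_0$.

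The hardest step will be this last one: ruling out a strictly positive limit $c$ for $V(x(t))$ is the genuinely nontrivial part, and it is exactly here that the continuity of $\mathrm{grad}\,V$ is indispensable, since it is what makes $x\mapsto\lan \mathrm{grad}\,V(x),g(x)\ran$ a continuous function on the compact set $K$ and hence bounded away from $0$. By contrast, the stability half is essentially a soft topological trapping argument requiring only continuity and positive definiteness of $V$.
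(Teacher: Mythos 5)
Your proof is correct, but there is nothing in the paper to compare it against: the paper states this proposition as a recalled result (Theorem 1.30 of Chicone \cite{C}) and gives no proof, using it only as a black box to establish Proposition \ref{prop:stablecond}. What you have written is the classical textbook argument, essentially the proof found in \cite{C} itself: compactness of the sphere $S_r$ together with positive definiteness of $V$ yields $m>0$; monotonicity of $t\mapsto V(x(t))$ plus a first-exit argument traps the trajectory in $B_r(x_0)$, giving Lyapunov stability (and, as you note, forward-time confinement in a compact subset of $U$); and for asymptotic stability, trapping the trajectory in a compact annulus lets you bound $\frac{d}{dt}V(x(t))\le-\gamma<0$, which contradicts $V\ge0$ unless the limit $c$ is zero. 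Two minor points you pass over quickly, both routine: first, you need continuity of $g$ (implicit whenever one writes the ODE \eqref{eq:pre_ode}) so that $x\mapsto\langle \mathrm{grad}\,V(x),g(x)\rangle$ is genuinely continuous on the annulus $K$; second, the closing implication, that $V(x(t))\to0$ forces $x(t)\to x_0$, deserves one more compactness step --- on the set $\{x:\varepsilon_0\le|x-x_0|\le r\}$ the function $V$ attains a strictly positive minimum, so $V(x(t))\to 0$ forces the trajectory eventually into every ball $B_{\varepsilon_0}(x_0)$. With those two remarks added, your argument is a complete, self-contained proof of the statement that the paper only quotes.
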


Define a Lagrangian $L_0: \RR^n\times \RR^n\to \RR$  by
\begin{equation*}
L_0(x,\xi):=K(\xi)-P(x).
\end{equation*}
Here, $K:\RR^n\to\RR$ and $P:\RR^n\to \RR$ are respectively a kinetic energy and a potential energy. 
We consider the Lagrange equation
\begin{equation}\label{gLeq}
\dfrac{\partial^2 K}{\partial\xi^2}(\dot x(t))\ddot{x}(t)+\dfrac{\partial P}{\partial x}(x(t))=Q(t,x(t),\dot x(t))\quad\mbox{for }t>0,
\end{equation}
where $Q:(0,\infty)\times\RR^n\times \RR^n\to\RR^n$ is a generalized force. We always assume that $K$, $P$ and $Q$ are smooth functions.

We next recall an application of Lyapunov's stability theorem to \eqref{gLeq}. 
The following proposition provides a sufficient condition for Lyapunov/asymptotic stability of  an equilibrium  of \eqref{gLeq}. 
We write the proof for researchers in other fields, although the proof is elementary.

\begin{prop}\label{prop:stablecond}
Let $x=x_d\in\RR^n$ be an equilibrium point of \eqref{gLeq}. 
Let $K$ be convex such that $K(0)=0$.
Let $Q$ satisfy
\begin{equation}\label{gfc1}
\lan Q(t,x,\xi),\xi\ran\leq 0\quad\mbox{for } t>0,\ x\in\RR^n,\ \xi\in\RR^n.
\end{equation}
Assuming that $P$ is locally positive definite around $x_d$, i.e., there exists $R>0$ such that
\begin{equation}\label{pp1}
P(x_d)=0\quad\mbox{and}\quad P(x)>0\quad\mbox{for }x\in B_R(x_d)\setminus \{x_d\}.
\end{equation}
Then, $x=x_d$ is Lyapunov stable. Furthermore, if
\begin{equation}\label{gfc2}
\lan Q(t,x,\xi),\xi\ran < 0\quad\mbox{for } t>0,\ (x,\xi)\in\RR^n\times \RR^n\setminus\{(x_d,0)\},
\end{equation}
then $x=x_d$ is an asymptotically stable equilibrium point.
\end{prop}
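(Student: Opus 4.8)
The plan is to use the total mechanical energy as a Lyapunov function and to reduce the second-order equation \eqref{gLeq} to a first-order system so that Proposition \ref{prop:Lyapunov} applies. Writing $\xi=\dot x$, equation \eqref{gLeq} becomes a first-order system in the state $(x,\xi)$ whose equilibrium is $(x_d,0)$ (the velocity vanishes at an equilibrium, and the configuration satisfies $\frac{\partial P}{\partial x}(x_d)=Q(t,x_d,0)$). As the candidate Lyapunov function I would take the Legendre transform of $K$ augmented by the potential,
\[
V(x,\xi):=\lan \xi,\tfrac{\partial K}{\partial\xi}(\xi)\ran-K(\xi)+P(x),
\]
which is the mechanical energy associated with the Lagrangian $L_0=K-P$; when $K$ is a quadratic form this reduces to the familiar $K(\xi)+P(x)$.

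The first key step is positivity of $V$ on a punctured neighbourhood of $(x_d,0)$. Set $D(\xi):=\lan\xi,\frac{\partial K}{\partial\xi}(\xi)\ran-K(\xi)$. Convexity of $K$ gives the tangent-line inequality $K(0)\ge K(\xi)+\lan\frac{\partial K}{\partial\xi}(\xi),-\xi\ran$, and together with $K(0)=0$ this yields $D(\xi)\ge0$, with $D(0)=0$; strict convexity promotes this to $D(\xi)>0$ for $\xi\neq0$. Combining $D(\xi)\ge0$ with \eqref{pp1} shows $V(x_d,0)=0$ and $V>0$ on $(B_R(x_d)\times\RR^n)\setminus\{(x_d,0)\}$, so the first two conditions of Proposition \ref{prop:Lyapunov} hold on $U:=B_R(x_d)\times\RR^n$.

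The second key step is the dissipation identity. Differentiating $V$ along a solution and substituting \eqref{gLeq} for the term $\frac{\partial^2K}{\partial\xi^2}(\dot x)\ddot x$, the gradient-of-$K$ contributions cancel and one is left with
\[
\frac{d}{dt}V(x(t),\dot x(t))=\lan Q(t,x(t),\dot x(t)),\dot x(t)\ran.
\]
By \eqref{gfc1} the right-hand side is $\le0$, so $V$ is a Lyapunov function and Proposition \ref{prop:Lyapunov} gives Lyapunov stability of $x_d$.

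I expect the asymptotic-stability claim to be the main obstacle. Under \eqref{gfc2} the identity above is strictly negative precisely when $\dot x\neq0$, but it vanishes identically on the slice $\{\xi=0\}$; hence $\frac{d}{dt}V<0$ cannot hold at every non-equilibrium point, and the strict-Lyapunov clause of Proposition \ref{prop:Lyapunov} does not apply verbatim. The remedy I would use is an invariance (LaSalle-type) argument: any complete trajectory on which $\frac{d}{dt}V\equiv0$ must have $\dot x\equiv0$, whence $\ddot x\equiv0$ and \eqref{gLeq} forces $\frac{\partial P}{\partial x}(x)=Q(t,x,0)$; matching this with the equilibrium relation isolates $x=x_d$, so the largest invariant set contained in $\{\frac{d}{dt}V=0\}$ is $\{(x_d,0)\}$ and asymptotic stability follows. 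A secondary point to handle is the possible $t$-dependence of $Q$, which makes the reduced system non-autonomous; it is harmless here because $V$ is time-independent and the bound \eqref{gfc1} is uniform in $t$, but it should be acknowledged when invoking the invariance argument.
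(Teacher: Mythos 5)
Your Lyapunov function $V(x,\xi)=\lan \partial_\xi K(\xi),\xi\ran-K(\xi)+P(x)$ and the dissipation identity $\frac{d}{dt}V=\lan Q,\xi\ran$ are exactly what the paper uses, and your Lyapunov-stability half matches the paper's proof (including a shared gloss: positivity of $V$ at points $(x_d,\xi)$ with $\xi\neq0$ needs $\lan\partial_\xi K(\xi),\xi\ran-K(\xi)>0$, i.e.\ strict convexity of $K$; you noted this and then nevertheless asserted $V>0$ from $D(\xi)\ge0$ alone). The divergence is in the asymptotic-stability half. The paper simply substitutes \eqref{gfc2} into the identity and invokes the strict-Lyapunov clause of Proposition \ref{prop:Lyapunov}; you instead observe, correctly, that $\lan Q(t,x,0),0\ran=0$ forces the derivative to vanish on the whole slice $\{\xi=0\}$, so the strict clause cannot apply, and you attempt a LaSalle-type repair. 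Your observation in fact shows more: \eqref{gfc2} as literally written (for all $(x,\xi)\neq(x_d,0)$, including $\xi=0$, $x\neq x_d$) is satisfied by no $Q$ whatsoever, so the ``furthermore'' clause is vacuous as stated, and the damping force $Q=-\mu\dot\theta$ used later in the paper does not satisfy it; only your reading (strict negativity for $\xi\neq0$) makes the proposition applicable at all.

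The genuine gap is your closing step ``matching this with the equilibrium relation isolates $x=x_d$.'' Nothing in \eqref{pp1}, \eqref{gfc1} or (the intended reading of) \eqref{gfc2} forbids other solutions of $\partial_x P(x)=Q(t,x,0)$ arbitrarily close to $x_d$, and in fact the claim you are trying to prove is false under that reading. Take $n=1$, $K(\xi)=\xi^2/2$, $Q=-\mu\xi$, $x_d=0$, and $P(x)=e^{-1/x^2}\bigl(2+\sin(2/x^2)\bigr)$ for $x\neq0$ with $P(0)=0$: this $P$ is smooth and locally positive definite at $0$, yet $P'(x)=(2/x^3)e^{-1/x^2}\bigl(2+\sin(2/x^2)-2\cos(2/x^2)\bigr)$ vanishes along a sequence $x_k\to0$, because $2+\sin\theta-2\cos\theta$ has range $[2-\sqrt{5},\,2+\sqrt{5}]$ and so changes sign infinitely often. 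Hence the system has rest points $(x_k,0)$ accumulating at the origin, which is therefore Lyapunov stable but not asymptotically stable, even though all hypotheses (in your reading) hold. Consequently no invariance argument can close your proof without an additional hypothesis, e.g.\ that $x_d$ is the unique solution of $\partial_x P(x)=Q(t,x,0)$ in a neighborhood of $x_d$ --- which is what actually holds in the paper's application, where $P''(\theta_d)>0$ gives local isolation; one would also need $Q$ autonomous, or a Krasovskii/Matrosov-type argument, to legitimize LaSalle for the non-autonomous system, a point you flagged but did not resolve. In short: your stability part is correct and identical to the paper's, but the asymptotic part cannot be completed as proposed --- not because your LaSalle idea is wrong, but because the proposition itself requires a strengthened hypothesis for any proof to go through; the paper's own proof avoids your obstruction only by relying on the literally unsatisfiable condition \eqref{gfc2}.
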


\begin{proof}
We set 
\[
V(x,\xi):=\left\lan\dfrac{\partial K}{\partial \xi}(\xi),\xi\right\ran-K(\xi)+P(x).
\]
We show that $V$ becomes a Lyapunov function for
\begin{align}\label{gLs}
\begin{cases}
&\dfrac{\partial^2 K}{\partial\xi^2}(\xi(t))\dot{\xi}(t)+\dfrac{\partial P}{\partial x}(x(t))=Q(t,x(t),\xi(t)),\\
&\xi(t)=\dot x(t),
\end{cases}
\quad t>0.
\end{align}
at an equilibrium point, $(x,\xi)=(x_d,0)$.
Since $K$ is convex and \eqref{pp1} holds, $V$ is locally positive definite around $(x_d,0)$.

By \eqref{gLeq} and \eqref{gfc1}, we have
\begin{align*}
\dfrac{d}{dt}(V(x(t),\xi(t)))=\left\lan \dfrac{\partial^2K}{\partial\xi^2}(\xi)\dot\xi,x\right\ran+\left\lan\dfrac{\partial P}{\partial x},\xi \right\ran
=\lan Q,\xi\ran
\leq 0.
\end{align*}
Thanks to Proposition \ref{prop:Lyapunov}, $x_d$ is Lyapunov stable since $V$ is a Lyapunov function for \eqref{gLs}.

In addition, we assume \eqref{gfc2}. Similarly, it follows that
\[
\dfrac{d}{dt}(V(x(t),\xi(t)))
< 0\quad\mbox{for } t>0,\ (x,\xi)\in\RR^n\times \RR^n\setminus\{(x_d,0)\},
\]
which implies that $x_d$ is asymptotically stable.
\end{proof}

\section{Stability condition of a 1-link-2-muscle musculoskeletal system with routing points}\label{sec:scond}

In this section, we are concerned with a sufficient condition for the stability of the equilibrium point, $\theta=\theta_d$, of the equation \eqref{meq}.

Define a function $f=f(\cdot;i,j):(-\theta_0,\pi)\to\RR$ by
\begin{align}
\begin{split}\label{fdef}
f(\theta)=f(\theta;i,j)&:=\left\{\left(a_{ij}+b_{ij}\sin\dfrac{\theta}{2}\right)^2+\left(c_{ij}+b_{ij}\cos\dfrac{\theta}{2}\right)^2\right\}^{1/2}\\
&=\left\{a_{ij}^2+b_{ij}^2+c_{ij}^2+2b_{ij}\sqrt{a_{ij}^2+c_{ij}^2}\sin\left(\dfrac{\theta}{2}+\alpha_{ij}\right)\right\}^{1/2}
\end{split}
\end{align}
for constants $a_{ij},b_{ij},c_{ij}\in\RR$, where $\alpha_{ij}\in[0,2\pi)$ satisfies
\begin{equation*}
\sin\al_{ij}=\dfrac{c_{ij}}{\sqrt{a_{ij}^2+c_{ij}^2}},\quad \cos\al_{ij}=\dfrac{a_{ij}}{\sqrt{a_{ij}^2+c_{ij}^2}}.
\end{equation*}
In what follows, the constants $a_{ij},b_{ij},c_{ij}$ are always given by
\begin{align}\label{eq:defaijbijcij}
\begin{split}
\begin{array}{llr}
&a_{11}=L_0-b_1, \ b_{11}=-\ell_1, \ c_{11}=d_1\qquad &\mbox{for }i=1,\ j=1; \\
&a_{12}=r_1,\ b_{12}=-\ell_1, \ c_{12}=s_1&\mbox{for }i=1,\ j=2; \\
&a_{21}=-(L_0-b_2),\ b_{21}=-\ell_2,\ c_{21}=d_2&\mbox{for }i=2,\ j=1; \\
&a_{22}=-r_2,\ b_{22}=-\ell_2,\ c_{22}=s_2&\mbox{for }i=2,\ j=2. 
\end{array}
\end{split}
\end{align}
We note that $f(\theta;i,j)$ becomes $q_{ij}$ ($i,j=1,2$).
We compute $f=f(\cdot;i,j)$ instead of $q_{ij}$ to obtain a sufficient condition for the stability.
We note that 
\begin{align}\label{cond:bc}
b_{ij}<0\quad\mbox{and}\quad c_{ij}>0\quad\mbox{for }i,j=1,2.
\end{align}

We present a list of hypotheses on $a_{ij},b_{ij},c_{ij}$ and $\theta_0$. 
We assume the following for $f>0$:
\begin{align}\label{eq:assum1}
\sqrt{a_{ij}^2+c_{ij}^2}\neq |b_{ij}|.
\end{align}
This assumption means that $P_{ij}\neq P_{i(j+1)}$ ($i,j=1,2$) as shown in Figure \ref{fig:target_system}.

Next, we suppose that 
\begin{equation}\label{eq:assum2}
a_{1j}>0\quad \mbox{and}\quad a_{2j}<0\quad\mbox{for }j=1,2.
\end{equation}
Thus, we have
\begin{align}\label{cond:al}
\al_{11},\al_{12}\in (0,\pi/2)\quad \mbox{and}\quad \al_{21},\al_{22}\in (\pi/2,\pi).
\end{align}

Finally, we give an assumption of $\theta_0>0$. We set
\[
C_{\theta_0}=C_{\theta_0,ij}:=\sqrt{a_{ij}^2+c_{ij}^2}\sin\left(-\dfrac{\theta_0}{2}+\al_{ij}\right).
\]
To have a comparison between $C_{\theta_0}$, $a$ and $c$, we assume that
\begin{equation}\label{theta01}
0<\theta_0<\min\{2\min\{\al_{21},\al_{22}\}-\pi,\ \pi/4\}.
\end{equation}
In fact, \eqref{theta01} implies
\[
\dfrac{c_{2j}}{\sqrt{a_{2j}^2+c_{2j}^2}}=\sin\al_{2j}<\sup_{\theta\in(-\theta_0,\pi)}\sin\left(\dfrac{\theta}{2}+\al_{2j}\right)=\sin\left(-\dfrac{\theta_0}{2}+\al_{2j}\right)=\dfrac{C_{\theta_0,2j}}{\sqrt{a_{2j}^2+c_{2j}^2}}<1.
\]

We assume that $k\dfrac{dq_2}{d\theta}(\theta_d)$ and $ -k\dfrac{dq_1}{d\theta}(\theta_d)$ in $v(\theta_d)$ are  muscular tensions. Therefore, we require that they are positive. 
A sufficient condition for that requirement is as follows.

\begin{prop}\label{prop:ifp}
Assume that \eqref{eq:assum1}, \eqref{eq:assum2} and \eqref{theta01} hold. 
We have $\dfrac{dq_2}{d\theta}(\theta_d)>0$ for $\theta_d\in(-\theta_0,\pi)$.
In addition, we assume
\begin{equation}\label{ifpcond}
-\theta_0<\theta_d<\pi-2\max\{\al_{11},\al_{12}\}.
\end{equation}
Then, $k\dfrac{dq_2}{d\theta}(\theta_d)$ and $ -k\dfrac{dq_1}{d\theta}(\theta_d)$ are positive for any $k>0$.
\end{prop}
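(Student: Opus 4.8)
The plan is to reduce everything to the sign of a single cosine factor. Using the closed form in \eqref{fdef}, I would first differentiate $f(\cdot;i,j)$ directly to obtain
\[
\frac{df}{d\theta}(\theta;i,j)=\frac{b_{ij}\sqrt{a_{ij}^2+c_{ij}^2}}{2f(\theta;i,j)}\cos\left(\frac{\theta}{2}+\al_{ij}\right).
\]
Since $f>0$ and $\sqrt{a_{ij}^2+c_{ij}^2}>0$, and since $b_{ij}<0$ by \eqref{cond:bc}, the sign of $\frac{df}{d\theta}$ is exactly the opposite of the sign of $\cos\left(\frac{\theta}{2}+\al_{ij}\right)$. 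Because $q_i=q_{i1}+q_{i2}$ corresponds to $f(\cdot;i,1)+f(\cdot;i,2)$, it then suffices to control the sign of $\cos\left(\frac{\theta}{2}+\al_{ij}\right)$ for each of the two terms and add.

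For the claim $\frac{dq_2}{d\theta}(\theta_d)>0$, I would show that $\cos\left(\frac{\theta_d}{2}+\al_{2j}\right)<0$ for $j=1,2$ and every $\theta_d\in(-\theta_0,\pi)$; equivalently, that the argument $\frac{\theta_d}{2}+\al_{2j}$ stays inside $(\pi/2,3\pi/2)$. The upper end is immediate from $\al_{2j}<\pi$ in \eqref{cond:al}, which gives $\frac{\theta_d}{2}+\al_{2j}<\frac{\pi}{2}+\pi=\frac{3\pi}{2}$. The lower end is where \eqref{theta01} does the work: from $\theta_0<2\min\{\al_{21},\al_{22}\}-\pi$ I get $\al_{2j}-\frac{\theta_0}{2}>\frac{\pi}{2}$, so even at $\theta_d=-\theta_0$ the argument exceeds $\pi/2$. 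This is exactly the inequality already recorded via $C_{\theta_0,2j}$ just after \eqref{theta01}. Hence both summands of $\frac{dq_2}{d\theta}(\theta_d)$ are positive, proving the first assertion.

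For the second assertion I additionally invoke \eqref{ifpcond} and show $\cos\left(\frac{\theta_d}{2}+\al_{1j}\right)>0$ for $j=1,2$, i.e. that $\frac{\theta_d}{2}+\al_{1j}\in(-\pi/2,\pi/2)$. The lower end is harmless: from $\al_{1j}>0$ in \eqref{cond:al} and $\theta_0<\pi/4$ in \eqref{theta01} I get $\frac{\theta_d}{2}+\al_{1j}>-\frac{\theta_0}{2}>-\frac{\pi}{2}$. The upper end is precisely what \eqref{ifpcond} is designed to give: $\theta_d<\pi-2\max\{\al_{11},\al_{12}\}$ yields $\frac{\theta_d}{2}+\al_{1j}<\frac{\pi}{2}-\max\{\al_{11},\al_{12}\}+\al_{1j}\le\frac{\pi}{2}$. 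Thus both summands of $\frac{dq_1}{d\theta}(\theta_d)$ are negative, so $\frac{dq_1}{d\theta}(\theta_d)<0$. Combining, for any $k>0$ we obtain $k\frac{dq_2}{d\theta}(\theta_d)>0$ and $-k\frac{dq_1}{d\theta}(\theta_d)>0$, as required.

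I expect the only delicate point to be the interval bookkeeping --- confirming that $\frac{\theta}{2}+\al_{ij}$ never leaves the prescribed half-period of cosine over the entire range of $\theta_d$. The binding constraints are the lower endpoint for $q_2$ (which forces exactly the hypothesis \eqref{theta01}) and the upper endpoint for $q_1$ (which forces exactly \eqref{ifpcond}); everything else is monotonicity of the linear argument together with the sign information already packaged in \eqref{cond:bc}--\eqref{cond:al}.
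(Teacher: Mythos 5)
Your proof is correct and follows essentially the same route as the paper: differentiate the closed form \eqref{fdef} to get $f'(\theta)=\frac{b_{ij}\sqrt{a_{ij}^2+c_{ij}^2}}{2}f(\theta)^{-1}\cos\left(\frac{\theta}{2}+\al_{ij}\right)$, reduce each sign claim to locating $\frac{\theta_d}{2}+\al_{ij}$ in a half-period of cosine, and use \eqref{theta01} for the $i=2$ lower endpoint and \eqref{ifpcond} for the $i=1$ upper endpoint. If anything, your bookkeeping is slightly more explicit than the paper's, whose proof attributes $\cos\left(\frac{\theta_d}{2}+\al_{2j}\right)<0$ to \eqref{cond:al} alone, even though, as you correctly note, the lower endpoint genuinely requires \eqref{theta01}.
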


\begin{rem}
For a typical 1-link-2-muscle muscloskeletal system without routing points, 
balanced muscular tensions
are always positive at any target angle.
\end{rem}

\begin{proof}[Proof of Proposition \ref{prop:ifp}]
From \eqref{fdef}, we have
\begin{equation*}
f'(\theta_d)=\dfrac{b_{ij}\sqrt{a_{ij}^2+c_{ij}^2}}{2}f(\theta_d)^{-1}\cos\left(\dfrac{\theta_d}{2}+\al_{ij}\right).
\end{equation*}
By \eqref{cond:al}, it is obvious that
\[
\cos\left(\dfrac{\theta_d}{2}+\al_{2j}\right)<0 \quad\mbox{for } \theta_d\in(-\theta_0,\pi),\ j=1,2,
\]
which implies that $f'(\theta_d;2,j)>0$. Thus, $\dfrac{dq_2}{d\theta}(\theta_d)=\sum_{j=1}^2f'(\theta_d;2,j)>0$.

Let us prove that $-\dfrac{dq_1}{d\theta}(\theta_d)>0$ under \eqref{ifpcond}. By \eqref{cond:al} and \eqref{theta01}, it follows that
\[
-\dfrac{\pi}{8}<\dfrac{\theta_d}{2}+\al_{1j}<\dfrac{\pi}{2},
\]
which yields
\[
\cos\left(\dfrac{\theta_d}{2}+\al_{1j}\right)>0 \quad\mbox{for }j=1,2.
\]
Thus, $f'(\theta_d;1,j)<0$ for $j=1,2$, which implies $-\dfrac{dq_1}{d\theta}(\theta_d)=-\sum_{j=1}^2f'(\theta_d;1,j)>0$.
\end{proof}

The following theorem asserts that there exists a sufficient condition for the asymptotic stability of $\theta=\theta_d$.

\begin{thm}\label{thm:sc}
Assume that \eqref{eq:assum1}, \eqref{eq:assum2} and \eqref{theta01} hold.
Then, for $a_{ij}, b_{ij},c_{ij}$ given by \eqref{eq:defaijbijcij}, 
the equilibrium point $\theta_d\in\bigcup_{i,j=1,2}\Theta_{ij}\cap\Theta_0$  of \eqref{meq} is asymptotically stable, where $\Theta_{ij}$ and $\Theta_0$ are defined by 
\begin{equation*}
\begin{split}
&\Theta_0:=\Set{\theta | \theta \mbox{ satisfies }\eqref{ifpcond}},\\
&\Theta_{1j}:=\Set{\theta | \theta \mbox{ satisfies } \eqref{eq:lem41}}\ \ \mbox{and} \ \
\Theta_{2j}:=\Set{\theta | \theta \mbox{ satisfies } \eqref{eq:lem42}}.
\end{split}
\end{equation*}
\end{thm}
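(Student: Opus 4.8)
The plan is to read \eqref{meq} as a one–dimensional damped Lagrange system and invoke Proposition \ref{prop:stablecond}. I take $n=1$, kinetic energy $K(\xi):=\tfrac12 I\xi^2$, generalized force $Q(t,\theta,\xi):=-\mu\xi$, and potential
\[
P(\theta):=\int_{\theta_d}^{\theta}\left\lan \frac{dq}{d\theta}(s),v(\theta_d)\right\ran ds ,
\]
so that $P'(\theta)=\lan \frac{dq}{d\theta}(\theta),v(\theta_d)\ran$ and \eqref{meq} is precisely \eqref{gLeq} for these data. Because $I>0$, $K$ is convex with $K(0)=0$, and $\lan Q,\xi\ran=-\mu\xi^2\le 0$; thus the first two hypotheses of Proposition \ref{prop:stablecond} hold automatically, and the whole theorem reduces to the single task of showing that $P$ is locally positive definite at $\theta_d$.

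First I record the two cheap facts. By construction $P(\theta_d)=0$, and $\theta_d$ is automatically a critical point: since $v(\theta_d)$ is, by its very definition, orthogonal to $\frac{dq}{d\theta}(\theta_d)$, one has $P'(\theta_d)=\lan \frac{dq}{d\theta}(\theta_d),v(\theta_d)\ran=0$. Moreover the $q_{ij}=f(\cdot\,;i,j)$ are smooth on $(-\theta_0,\pi)$ because \eqref{eq:assum1} bounds the radicand in \eqref{fdef} away from $0$ (its global minimum over $\theta$ equals $(\sqrt{a_{ij}^2+c_{ij}^2}-|b_{ij}|)^2$), so $P\in C^\infty$. Consequently local positive definiteness will follow from the nondegenerate second–order test $P''(\theta_d)>0$. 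Writing $v(\theta_d)=k\,(q_2'(\theta_d),-q_1'(\theta_d))$ with $k>0$, differentiation gives
\[
P''(\theta_d)=k\bigl(q_1''(\theta_d)\,q_2'(\theta_d)-q_2''(\theta_d)\,q_1'(\theta_d)\bigr)=k\bigl(q_2'(\theta_d)\,q_1''(\theta_d)+(-q_1'(\theta_d))\,q_2''(\theta_d)\bigr),
\]
and Proposition \ref{prop:ifp} already supplies the signs $q_2'(\theta_d)>0$ and $q_1'(\theta_d)<0$ on $\Theta_0$, so both weights in the last expression are strictly positive.

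The main obstacle is therefore the curvature analysis $q_i''(\theta_d)=f''(\theta_d;i,1)+f''(\theta_d;i,2)$. Differentiating the closed form $f'=\tfrac12 b_{ij}\sqrt{a_{ij}^2+c_{ij}^2}\,f^{-1}\cos(\tfrac{\theta}{2}+\al_{ij})$ once more expresses $f''$ as a combination of $\sin(\tfrac{\theta_d}{2}+\al_{ij})$ and $\cos^2(\tfrac{\theta_d}{2}+\al_{ij})$ over powers of $f$, whose sign is dictated by $\sin(\tfrac{\theta_d}{2}+\al_{ij})$ together with \eqref{cond:bc}, \eqref{cond:al} and \eqref{theta01}. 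Since these signs are not constant across the admissible range of $\theta_d$, no single inequality can certify $P''(\theta_d)>0$ everywhere; rather each $\Theta_{ij}$ (through \eqref{eq:lem41}/\eqref{eq:lem42}) should record one explicit sufficient sign condition under which the corresponding curvature contribution has the right sign, the remaining contributions being controlled by the standing hypotheses, and the union $\bigcup_{i,j}\Theta_{ij}$ assembles the resulting sufficient regimes, intersected with the physical window $\Theta_0$. Converting the trigonometric sign conditions on $f''$ into explicit inequalities in $a_{ij},b_{ij},c_{ij},\theta_d$ is where I expect essentially all the difficulty to reside.

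Granting $P''(\theta_d)>0$, the potential has a strict local minimum at $\theta_d$, hence \eqref{pp1} holds and Proposition \ref{prop:stablecond} delivers Lyapunov stability at once. For the asymptotic upgrade I note the one genuine subtlety: $\lan Q,\xi\ran=-\mu\xi^2$ is strictly negative only off $\{\xi=0\}$, so \eqref{gfc2} is not literally met. This is repaired in the standard way—either by LaSalle's invariance principle (a complete trajectory with $\dot\theta\equiv0$ forces $\ddot\theta\equiv0$, hence $P'(\theta)\equiv0$, leaving only $\theta_d$ near the equilibrium), or, staying inside the stated framework, by replacing $V$ with $V+\ep I(\theta-\theta_d)\dot\theta$ for small $\ep>0$ to obtain a genuinely strict Lyapunov function near $(\theta_d,0)$ and invoking the strict clause of Proposition \ref{prop:Lyapunov}. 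Either route yields asymptotic stability of $\theta=\theta_d$ for $\theta_d\in\bigcup_{i,j}\Theta_{ij}\cap\Theta_0$.
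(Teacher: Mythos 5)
Your setup is identical to the paper's proof: the same $K$, the same potential (your integral form of $P$ equals the paper's $P(\theta)=\lan q(\theta)-q(\theta_d),v(\theta_d)\ran$), the same reduction through Proposition \ref{prop:stablecond} to local positive definiteness of $P$, the same identities $P(\theta_d)=0$ and $P'(\theta_d)=0$, the same formula $P''(\theta_d)=k\bigl(q_1''(\theta_d)q_2'(\theta_d)-q_2''(\theta_d)q_1'(\theta_d)\bigr)$, and the same use of Proposition \ref{prop:ifp} for the signs of $q_1'(\theta_d)$, $q_2'(\theta_d)$ on $\Theta_0$. But your proof stops exactly where the theorem's content begins: you never show that $\theta_d$ lying in the sets $\Theta_{ij}$ forces $f''(\theta_d;i,j)>0$; you only say that the $\Theta_{ij}$ ``should record'' such conditions and then proceed by ``granting $P''(\theta_d)>0$.'' That step is the whole point of the paper's Section \ref{sec:proof} (Lemmas \ref{lem:1} and \ref{lem:2}): differentiate \eqref{fdef} twice to get
\[
f''(\theta)=-\dfrac{b_{ij}\sqrt{a_{ij}^2+c_{ij}^2}}{4}\,f(\theta)^{-3}\left(A\sin^2\left(\dfrac{\theta}{2}+\al_{ij}\right)+B\sin\left(\dfrac{\theta}{2}+\al_{ij}\right)+A\right)
\]
with $A=b_{ij}\sqrt{a_{ij}^2+c_{ij}^2}<0$ and $B=a_{ij}^2+b_{ij}^2+c_{ij}^2$; observe $\sqrt{B^2-4A^2}=|a_{ij}^2-b_{ij}^2+c_{ij}^2|$, so the downward parabola in $\sin(\theta/2+\al_{ij})$ is positive exactly between the roots $|b_{ij}|/\sqrt{a_{ij}^2+c_{ij}^2}$ and $\sqrt{a_{ij}^2+c_{ij}^2}/|b_{ij}|$ (this is \eqref{eq:sinbe}--\eqref{eq:discriminant1}); then use \eqref{cond:al} (for $i=1$: $\al_{1j}\in(0,\pi/2)$, so the supremum of the sine over $(-\theta_0,\pi)$ is $1$; for $i=2$: $\al_{2j}\in(\pi/2,\pi)$ together with \eqref{theta01}, so the supremum is $C_{\theta_0,2j}/\sqrt{a_{2j}^2+c_{2j}^2}$) to translate the sine inequality into the angle windows \eqref{eq:lem41} and \eqref{eq:lem42}. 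Without this trigonometric work, the sets $\Theta_{ij}$ appearing in the statement have no proved connection to $P''(\theta_d)>0$, so the theorem is not established.

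Moreover, your proposed way of assembling the regimes is wrong. You claim that membership in a single $\Theta_{ij}$ makes ``the corresponding curvature contribution'' positive, the remaining ones ``being controlled by the standing hypotheses.'' They are not: for $i=2$, hypotheses \eqref{eq:assum2} and \eqref{theta01} give $\inf_{\theta\in(-\theta_0,\pi)}\sin(\theta/2+\al_{2j})=a_{2j}/\sqrt{a_{2j}^2+c_{2j}^2}<0$, so for $\theta$ near $\pi$ the sine lies below both (positive) roots and $f''(\theta;2,j)<0$ under the standing hypotheses alone. The argument genuinely needs all four inequalities $f''(\theta_d;i,j)>0$ simultaneously, i.e.\ $\theta_d$ in the intersection of the four $\Theta_{ij}$ (and of $\Theta_0$); this is what the paper's proof actually delivers, and its ``$\bigcup$'' should be read accordingly. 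One point in your favor: you correctly notice that $Q=-\mu\xi$ satisfies \eqref{gfc1} but not literally \eqref{gfc2} (it vanishes when $\xi=0$, $\theta\neq\theta_d$), and your repair by LaSalle's invariance principle or by adding the cross term $\ep I(\theta-\theta_d)\dot\theta$ to $V$ is more careful than the paper, which invokes Proposition \ref{prop:stablecond} without comment on this point; but that extra care does not fill the missing curvature lemmas, which carry essentially all of the theorem's content.
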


\section{Proof of Theorem \ref{thm:sc}}\label{sec:proof}

By letting
\[
K(\xi):=\dfrac{1}{2}I\xi^2,\quad
P(\theta):=\lan q(\theta)-q(\theta_d),v(\theta_d)\ran\quad\mbox{and}\quad
Q(\xi):=-\mu\xi^2,
\]
\eqref{gLeq} becomes \eqref{meq}.
From Proposition \ref{prop:stablecond}, we see that $\theta_d$ is an asymptotically stable point if $P$ takes strictly the minimum at $\theta_d$.
By the definition of $v(\theta_d)$,
\[
\dfrac{dP}{d\theta}(\theta_d)=\left\lan \dfrac{dq}{d\theta}(\theta_d),v(\theta_d) \right\ran=0.
\]
By Proposition \ref{prop:ifp}, under \eqref{ifpcond}, if
\begin{equation*}
f''(\theta_d;i,j)=\dfrac{d^2q_{ij}}{d\theta^2}(\theta_d)>0\quad\mbox{for }i,j=1,2,
\end{equation*}
then
\begin{align*}
&\dfrac{d^2P}{d\theta^2}(\theta_d)
=k\Bigg(
\Bigg(\dfrac{d^2q_{11}}{d\theta^2}(\theta_d)+\dfrac{d^2q_{12}}{d\theta^2}(\theta_d) \Bigg)\dfrac{dq_2}{d\theta}(\theta_d)\\
&\hspace{30mm}
-\Bigg(\dfrac{d^2q_{21}}{d\theta^2}(\theta_d)+\dfrac{d^2q_{22}}{d\theta^2}(\theta_d) \Bigg)\dfrac{dq_1}{d\theta}(\theta_d)
\Bigg)>0
\end{align*}
for $k>0$. Hence, let us discuss the existence of $(\theta,a_{ij},b_{ij},c_{ij})$ that $f''(\theta)>0$ holds. 

Differentiating \eqref{fdef}, we have
\begin{align*}
f''(\theta)
=-\dfrac{b_{ij}\sqrt{a_{ij}^2+c_{ij}^2}}{4}f(\theta)^{-3}\left(A\sin^2\left(\dfrac{\theta}{2}+\al_{ij}\right)+B\sin\left(\dfrac{\theta}{2}+\al_{ij}\right)+A\right),
\end{align*}
where 
\[
A:=b_{ij}\sqrt{a_{ij}^2+c_{ij}^2}\quad\mbox{and}\quad B:=a_{ij}^2+b_{ij}^2+c_{ij}^2.
\]
By \eqref{cond:bc} and \eqref{eq:assum1}, the discriminant of a quadratic function implies
$f''(\theta)>0$ for $\theta\in(-\theta_0,\pi)$, satisfying
\begin{align}\label{eq:sinbe}
\dfrac{-B+\sqrt{B^2-4A^2}}{2A}<\sin\left(\dfrac{\theta}{2}+\al_{ij}\right)<\dfrac{-B-\sqrt{B^2-4A^2}}{2A}.
\end{align}
We observe that $\sqrt{B^2-4A^2}=|a_{ij}^2-b_{ij}^2+c_{ij}^2|$, which implies that
\begin{align}\label{eq:discriminant1}
\dfrac{-B+\sqrt{B^2-4A^2}}{2A}=\dfrac{|b_{ij}|}{\sqrt{a_{ij}^2+c_{ij}^2}}<1,\quad
\dfrac{-B-\sqrt{B^2-4A^2}}{2A}=\dfrac{\sqrt{a_{ij}^2+c_{ij}^2}}{|b_{ij}|}>1
\end{align}
in the case when $\sqrt{a_{ij}^2+c_{ij}^2}>|b_{ij}|$, and 
\[
\dfrac{-B+\sqrt{B^2-4A^2}}{2A}=\dfrac{\sqrt{a_{ij}^2+c_{ij}^2}}{|b_{ij}|}<1,\quad
 \dfrac{-B-\sqrt{B^2-4A^2}}{2A}=\dfrac{|b_{ij}|}{\sqrt{a_{ij}^2+c_{ij}^2}}>1
\]
in the case when $\sqrt{a_{ij}^2+c_{ij}^2}<|b_{ij}|$.

The following lemma shows a sufficient condition for $f''(\theta;1,j)>0$.

\begin{lem}\label{lem:1}
Let $i=1$. Assume that
\begin{align}\label{eq:lem41}
\begin{cases}
\max\left\{-\theta_0,\ 2\left(\ga_1-\al_{1j}\right)\right\}
<\theta
<\min\left\{\pi,\ 2\left(\pi-\ga_1-\al_{1j}\right)\right\}\quad &\mbox{if }\sqrt{a_{1j}^2+c_{1j}^2}>|b_{1j}|,\\
\max\left\{-\theta_0,\ 2\left(\ga_2-\al_{1j}\right)\right\}
<\theta
<\min\left\{\pi,\ 2\left(\pi-\ga_2-\al_{1j}\right)\right\} &\mbox{if }\sqrt{a_{1j}^2+c_{1j}^2}<|b_{1j}|,
\end{cases}
\end{align}
where $\sin^{-1}:[-1,1]\to[-\pi/2,\pi/2]$ is the inverse sine function, 
\[
\ga_1:=\sin^{-1}\left(\dfrac{|b_{1j}|}{\sqrt{a_{1j}^2+c_{1j}^2}}\right)\quad\mbox{and}\quad \ga_2:=\sin^{-1}\left(\dfrac{\sqrt{a_{1j}^2+c_{1j}^2}}{|b_{1j}|}\right).
\]
Then, $f''(\theta;1,j)>0$ for $j=1,2$. 
\end{lem}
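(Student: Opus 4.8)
The plan is to read off the sign of $f''(\theta;1,j)$ directly from the quadratic already exhibited in the running text, and then to translate the resulting constraint on $\sin(\theta/2+\al_{1j})$ into the stated interval for $\theta$. Writing $s:=\sin(\theta/2+\al_{1j})$, the computed expression for $f''$ shows that $f''(\theta)$ has the same sign as $-b_{1j}\sqrt{a_{1j}^2+c_{1j}^2}\,(As^2+Bs+A)$, since $f(\theta)^{-3}>0$. By \eqref{cond:bc} we have $b_{1j}<0$, so the prefactor $-b_{1j}\sqrt{a_{1j}^2+c_{1j}^2}$ is positive and $f''(\theta)>0$ is equivalent to $As^2+Bs+A>0$. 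As $A=b_{1j}\sqrt{a_{1j}^2+c_{1j}^2}<0$, this downward-opening quadratic is positive precisely between its two roots, which is exactly the condition \eqref{eq:sinbe}.

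Next I would substitute the explicit root values from \eqref{eq:discriminant1}. In the case $\sqrt{a_{1j}^2+c_{1j}^2}>|b_{1j}|$ the upper root equals $\sqrt{a_{1j}^2+c_{1j}^2}/|b_{1j}|>1$, so the upper inequality in \eqref{eq:sinbe} holds automatically because $s\le 1$; the only genuine constraint is the lower one, $s>|b_{1j}|/\sqrt{a_{1j}^2+c_{1j}^2}=\sin\ga_1$. In the opposite case $\sqrt{a_{1j}^2+c_{1j}^2}<|b_{1j}|$ the two roots swap roles, and the same reasoning leaves only $s>\sqrt{a_{1j}^2+c_{1j}^2}/|b_{1j}|=\sin\ga_2$. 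Thus in either case $f''(\theta)>0$ reduces to a single inequality $\sin(\theta/2+\al_{1j})>\sin\ga$, with $\ga\in\{\ga_1,\ga_2\}\subset(0,\pi/2)$.

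It then remains to solve $\sin(\theta/2+\al_{1j})>\sin\ga$ for $\theta$. From $\al_{1j}\in(0,\pi/2)$ by \eqref{cond:al} and $\theta\in(-\theta_0,\pi)$ with $\theta_0<\pi/4$ by \eqref{theta01}, the argument $\theta/2+\al_{1j}$ ranges in $(-\pi/8,\pi)$. On this range $\sin x>\sin\ga$ holds if and only if $\ga<x<\pi-\ga$: where $x\le 0$ one has $\sin x\le 0<\sin\ga$, and on $(0,\pi)$ the symmetry of the sine about $\pi/2$ gives the stated two-sided bound. Rewriting in terms of $\theta$ yields $2(\ga-\al_{1j})<\theta<2(\pi-\ga-\al_{1j})$, and intersecting with the domain $(-\theta_0,\pi)$ produces exactly \eqref{eq:lem41} in each of the two cases.

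The only point demanding care is the sine-to-interval step: one must confirm that $\theta/2+\al_{1j}$ never leaves $(-\pi/8,\pi)$, so that no additional solution branch of $\sin x>\sin\ga$ is overlooked, and that the negative part of this range contributes nothing. Both facts are immediate from the bounds on $\al_{1j}$ and $\theta_0$ recorded above, so this is a routine verification rather than a real obstacle, and the lemma follows.
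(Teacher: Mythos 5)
Your proposal is correct and follows essentially the same route as the paper: both reduce $f''(\theta;1,j)>0$ to the two-sided bound \eqref{eq:sinbe} on $\sin(\theta/2+\al_{1j})$, observe via \eqref{eq:discriminant1} that the upper root exceeds $1$ and is therefore vacuous, and translate the remaining inequality $\sin(\theta/2+\al_{1j})>\sin\ga$ into the $\theta$-interval \eqref{eq:lem41}. Your write-up merely fills in the details the paper compresses into ``By the definition of $\ga_1$, \eqref{eq:lem41} implies \eqref{eq:pf41_1},'' including the careful check that the argument $\theta/2+\al_{1j}$ stays in $(-\pi/8,\pi)$ so no other solution branch of the sine inequality arises.
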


\begin{proof}
We drop the indexes ``$1j$'' for simplicity.
Since $\al\in(0,\pi/2)$, 
\[
\sup_{\theta\in(-\theta_0,\pi)}\sin\left(\dfrac{\theta}{2}+\al\right)=1.
\]

We first suppose that $\sqrt{a^2+c^2}>|b|$. 
In view of \eqref{eq:sinbe} and \eqref{eq:discriminant1}, it follows that $f''(\theta)>0$ for $\theta\in(-\theta_0,\pi)$, satisfying
\begin{align}\label{eq:pf41_1}
\dfrac{|b|}{\sqrt{a^2+c^2}}<\sin\left(\dfrac{\theta}{2}+\al\right)\leq1.
\end{align}
By the definition of $\ga_1$, \eqref{eq:lem41} implies \eqref{eq:pf41_1}.

In the same manner, we can see that $f''>0$ in the case when $\sqrt{a^2+c^2}<|b|$, which completes the proof.
\end{proof}

We next show a sufficient condition for $f(\theta;2,j)>0$.
\begin{lem}\label{lem:2}
Let $i=2$. Let \eqref{theta01} hold. Assume that
\begin{align}\label{eq:lem42}
\begin{cases}
-\theta_0<\theta<2\left(\pi-\ga_3-\al_{2j}\right) \quad &\mbox{if }|b_{2j}|< \min\left\{\sqrt{a_{2j}^2+c_{2j}^2},\ C_{\theta_{0},2j}\right\},\\[5pt]
-\theta_0<\theta<2\left(\pi-\ga_4-\al_{2j}\right) &\mbox{if }|b_{2j}|>\sqrt{a_{2j}^2+c_{2j}^2}\max\left\{C_{\theta_{0},2j}^{-1}\sqrt{a_{2j}^2+c_{2j}^2},\ 1\right\},
\end{cases}
\end{align}
where
\[
\ga_3:=\sin^{-1}\left(\dfrac{|b_{2j}|}{\sqrt{a_{2j}^2+c_{2j}^2}}\right)\quad\mbox{and}\quad \ga_4:=\sin^{-1}\left(\dfrac{\sqrt{a_{2j}^2+c_{2j}^2}}{|b_{2j}|}\right).
\]
Then, $f''(\theta;2,j)>0$ for $j=1,2$.
\end{lem}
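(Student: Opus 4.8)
The plan is to follow the same template as the proof of Lemma \ref{lem:1}, dropping the indices ``$2j$'', but now exploiting the decisive structural difference that $\al\in(\pi/2,\pi)$ by \eqref{cond:al}. First I would invoke the hypothesis \eqref{theta01}, which forces $\al-\theta_0/2>\pi/2$, to show that for every $\theta\in(-\theta_0,\pi)$ the argument $\theta/2+\al$ lies in the interval $(\pi/2,3\pi/2)$; on this interval $\sin$ is strictly decreasing, so $\theta\mapsto\sin(\theta/2+\al)$ is strictly monotone decreasing. This is precisely the feature separating Lemma \ref{lem:2} from Lemma \ref{lem:1}: whereas for $i=1$ the supremum of $\sin(\theta/2+\al)$ over $(-\theta_0,\pi)$ equals $1$, here it is attained only in the limit $\theta\to-\theta_0$ and equals
\[
\sup_{\theta\in(-\theta_0,\pi)}\sin\left(\dfrac{\theta}{2}+\al\right)=\sin\left(-\dfrac{\theta_0}{2}+\al\right)=\dfrac{C_{\theta_0}}{\sqrt{a^2+c^2}}<1,
\]
the last inequality being the computation already recorded just after \eqref{theta01}.

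Next I would recall that, exactly as in the derivation of \eqref{eq:sinbe} and \eqref{eq:discriminant1}, the sign of $f''(\theta)$ is governed by a downward quadratic in $\sin(\theta/2+\al)$ whose roots are displayed in \eqref{eq:discriminant1}, so $f''(\theta)>0$ is equivalent to $\sin(\theta/2+\al)$ lying strictly between those roots. I would then split into the two cases of \eqref{eq:lem42}. When $\sqrt{a^2+c^2}>|b|$ the upper root exceeds $1$ and is vacuous, so the condition reduces to $\sin(\theta/2+\al)>|b|/\sqrt{a^2+c^2}$; comparing with the supremum above shows this is consistent only if $|b|<C_{\theta_0}$, which together with $\sqrt{a^2+c^2}>|b|$ is the first alternative $|b|<\min\{\sqrt{a^2+c^2},\,C_{\theta_0}\}$. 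When $\sqrt{a^2+c^2}<|b|$ the upper root again exceeds $1$ and the condition becomes $\sin(\theta/2+\al)>\sqrt{a^2+c^2}/|b|$; consistency with the supremum now forces $\sqrt{a^2+c^2}/|b|<C_{\theta_0}/\sqrt{a^2+c^2}$, i.e. $|b|>C_{\theta_0}^{-1}(a^2+c^2)$, which combined with $|b|>\sqrt{a^2+c^2}$ yields the second alternative.

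Finally I would translate each sine inequality into the stated bound on $\theta$. Because $\theta/2+\al$ runs through the decreasing branch $(\pi/2,3\pi/2)$, the inequality $\sin(\theta/2+\al)>s$ with $s\in(0,1)$ is equivalent to $\theta/2+\al<\pi-\sin^{-1}(s)$, i.e. $\theta<2(\pi-\sin^{-1}(s)-\al)$; taking $s=|b|/\sqrt{a^2+c^2}$ in the first case and $s=\sqrt{a^2+c^2}/|b|$ in the second produces exactly the upper endpoints $2(\pi-\ga_3-\al_{2j})$ and $2(\pi-\ga_4-\al_{2j})$ of \eqref{eq:lem42}, while the lower endpoint $-\theta_0$ is simply the left edge of the domain. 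Note that, in contrast to Lemma \ref{lem:1}, no outer $\min$ with $\pi$ or $\max$ with $-\theta_0$ is needed: since $\al_{2j}>\pi/2$ one checks directly that $2(\pi-\ga-\al_{2j})<\pi$, so the upper endpoint automatically stays inside the domain.

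The step I expect to be the main obstacle is the monotonicity bookkeeping in the last paragraph. One must be certain that the argument never re-enters the increasing branch of $\sin$ — which is exactly what \eqref{theta01} secures, for otherwise the preimage of the sine inequality would be a union of two intervals rather than a single interval ending at $2(\pi-\ga-\al_{2j})$ — and one must verify that the supremum $C_{\theta_0}/\sqrt{a^2+c^2}$ is genuinely attained at the left endpoint, so that the extra constraints on $|b|$ involving $C_{\theta_0}$ are precisely the nonemptiness conditions for the $\theta$-intervals and not spurious.
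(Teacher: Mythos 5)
Your proposal is correct and takes essentially the same route as the paper's proof: both use \eqref{theta01} together with $\al_{2j}\in(\pi/2,\pi)$ to confine $\theta/2+\al_{2j}$ to the decreasing branch of sine (so that the supremum of $\sin(\theta/2+\al_{2j})$ is $C_{\theta_0,2j}/\sqrt{a_{2j}^2+c_{2j}^2}$), reduce via \eqref{eq:sinbe}--\eqref{eq:discriminant1} to a lower bound on $\sin(\theta/2+\al_{2j})$ in each of the two cases of \eqref{eq:lem42}, and translate that bound into the stated $\theta$-interval. You merely spell out the monotonicity bookkeeping and the nonemptiness role of the conditions on $|b_{2j}|$, which the paper's proof leaves implicit in the phrase ``which holds by \eqref{eq:lem42}.''
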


\begin{proof}
We drop the indexes ``$2j$'' for simplicity. 

We first assume that $|b|< \min\left\{\sqrt{a^2+c^2},\ C_{\theta_0}\right\}$.
By \eqref{theta01} and $\al\in(\pi/2,\pi)$, we have
\[
\inf_{\theta\in(-\theta_0,\pi)}\sin\left(\dfrac{\theta}{2}+\al\right)=\dfrac{a}{\sqrt{a^2+c^2}}<0,\ \ 
\sup_{\theta\in(-\theta_0,\pi)}\sin\left(\dfrac{\theta}{2}+\al\right)=\dfrac{C_{\theta_0}}{\sqrt{a^2+c^2}}\in(0,1).
\]
In view of \eqref{eq:sinbe}, it follows that $f''(\theta)>0$ for $\theta\in(-\theta_0,\pi)$, satisfying
\[
\dfrac{|b|}{\sqrt{a^2+c^2}}<\sin\left(\dfrac{\theta}{2}+\al\right)<\dfrac{C_{\theta_0}}{\sqrt{a^2+c^2}},
\]
which holds by \eqref{eq:lem42}.

We similarly conclude that  $f''(\theta)>0$ for
\[
-\theta_0<\theta<2\left(\pi-\ga_4-\al\right)
\]
in the case when $|b|>\sqrt{a^2+c^2}\max\left\{C_{\theta_{0}}^{-1}\sqrt{a^2+c^2},\ 1\right\}$.
\end{proof}

Thanks to Lemma \ref{lem:1} and \ref{lem:2}, it follows that $\theta_d\in\bigcup_{i,j=1,2}\Theta_{ij}\cap\Theta_0$ is an asymptotic stable point of \eqref{meq}. In fact, the following example is contained in the above.

\begin{example}\label{example1}
We take $L_0-b_1=2\kappa$, $\ell_1=\kappa$, $d_1=2\kappa$, $r_1=\dfrac{\kappa}{2\sqrt2}$, $s_1=\dfrac{\kappa}{2\sqrt2}$, $L_0-b_2=2\sqrt{2}\kappa$, $\ell_2=\kappa$, $d_2=2\sqrt{2}\kappa$, $r_2=\dfrac{\kappa}{2\sqrt2}$ and $s_2=\dfrac{\kappa}{2\sqrt2}$ for any $\kappa\in\RR$. Then, we have
\[
\al_{11}=\pi/4,\quad \al_{12}=\pi/4,\quad \al_{21}=3\pi/4,\quad \al_{22}=3\pi/4.
\]
Thanks to Proposition \ref{prop:ifp}, inner forces $v_{d1},v_{d2}>0$. Theorem \ref{thm:sc} shows that $\theta=\theta_d\in(0,\pi/6)$ is asymptotically stable.
\end{example}

\section{Numerical and experimental results}\label{sec:nande}
This section shows the numerical simulation results and experimental results in a stable case based on Example \ref{example1}. 
We also give an unstable case to show that the motion convergence of the system in Figure \ref{fig:target_system} is not always stable, although we did not discuss an unstable condition above.

\subsection{Numerical simulation}
Since the solution may be unstable, we use a penalized equation for \eqref{meq} in order to simulate.
\begin{align}\label{eq:peq}
I\ddot\theta+\mu\dot\theta+\left\lan \dfrac{dq}{d\theta}(\theta),v(\theta_d) \right\ran+\dfrac{(\theta-\theta_{\text{max}})^+}{\ep}-\dfrac{(\theta_{\text{min}}-\theta)^+}{\ep}=0, \quad t>0,
\end{align}
where $\theta_{\text{max}}=41\pi/180$, $\theta_{\text{min}}=-\pi/180$, $\ep=1/1000$ and
\[
r^+:=
\begin{cases}
r\quad (r\geq 0),\\
0\quad (r<0).
\end{cases}
\]

Let $\theta^\ep$ be a solution of \eqref{eq:peq}. We note that $\theta^\ep$ coincides with a solution of \eqref{meq} if it is always in $(\theta_{\text{min}},\theta_{\text{max}})$. 
We also note that $\theta^\ep$ is an approximation of a solution of \eqref{meq}  with $\theta_{\text{min}}\leq\theta\leq\theta_{\text{max}}$ for small $\ep>0$.
More precisely, $\theta^\ep$ converges to a solution of the bilateral obstacle problem,
\[
\min\left\{\max\left\{I\ddot\theta+\mu\dot\theta+\left\lan \dfrac{dq}{d\theta}(\theta),v(\theta_d) \right\ran,\ \theta-\theta_{\text{max}}\right\},\ \theta-\theta_{\text{min}}\right\}=0,\quad t>0
\]
as $\ep\to0$. In fact, it holds true in at least the viscosity solution sense with an appropriate initial condition. We refer to \cite{M10} for this fact.

We first show a numerical simulation result in Figure \ref{fig:sim1} in the case of Example \ref{example1}.
We take the same parameters as Example \ref{example1} (unit: [mm]) for $\kappa=30$, and $L_0=70$ [mm], $L_1=15$ [mm], 
$I=4.2\times10^{-3}$ [kg $\mathrm{m}^2$], $\mu=0.1$ [-]. The following figures are the shape of the potential energy $P(\theta)$ and a motion behavior of the joint angle $\theta$ in the case when $\theta_d=\pi/12$.

\begin{figure}[htbp]
 \begin{minipage}{0.5\hsize}
\begin{center}
\includegraphics[keepaspectratio=true,height=50mm]{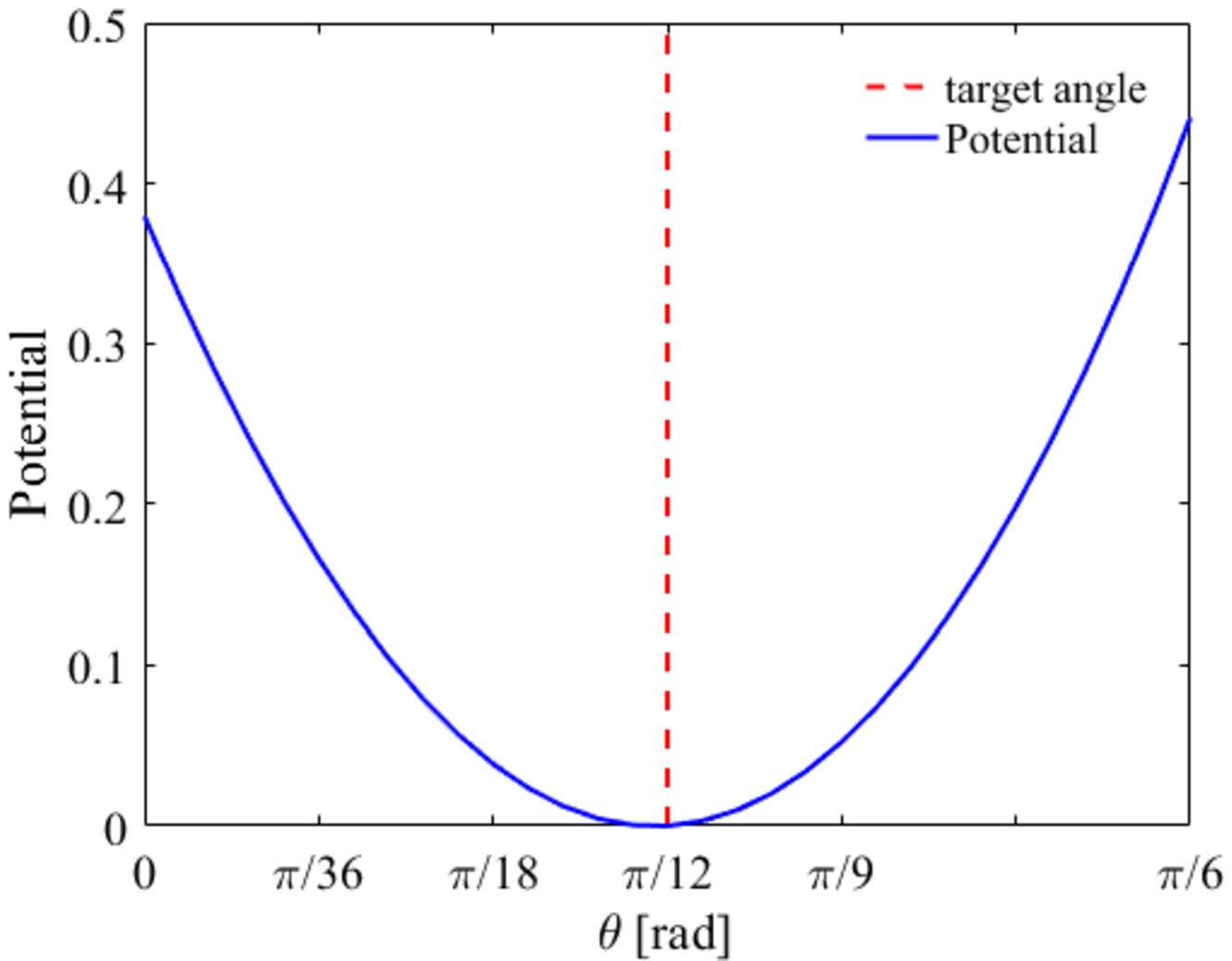}\vspace{-1.0mm}

Potential field generated by the internal muscular force balancing at $\theta_d=\pi/12$
\end{center}
 \end{minipage}
 \begin{minipage}{0.5\hsize}
\begin{center}
\includegraphics[keepaspectratio=true,height=50mm]{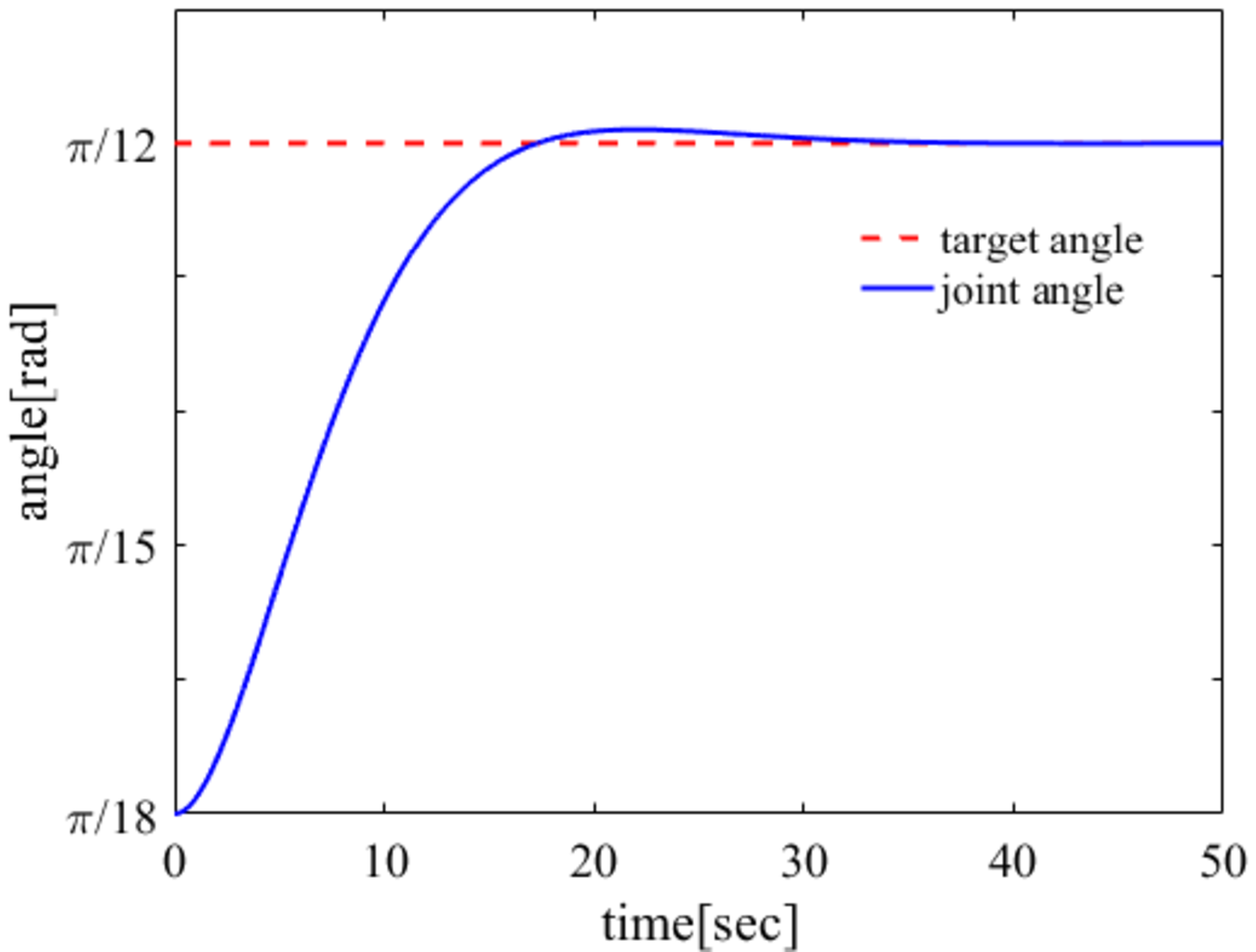}\vspace{-1.0mm}

Motion behavior of the joint angle, $\theta$, with the initial condition $(\theta(0),\dot\theta(0))=\pi/18,0)$ 
	\end{center}
 \end{minipage}
 \caption{Numerical simulation in a stable case}
 \label{fig:sim1}
\end{figure}

We next show an unstable case in Figure \ref{fig:sim2}, where the parameters are too far from a condition in Theorem \ref{thm:sc}. 
We take the same values of $L_0$, $L_1$, $I$, $\mu$ and $\theta_d$ as the above simulation. We also choose $b_i=20$ [mm], $d_i=30$ [mm], $\ell_i=30$, $r_i=15$ [mm], $s_i=25$ [mm] for $i=1,2$.

\begin{figure}[htbp]
 \begin{minipage}{0.5\hsize}
	\begin{center}
		\includegraphics[keepaspectratio=true,height=50mm]{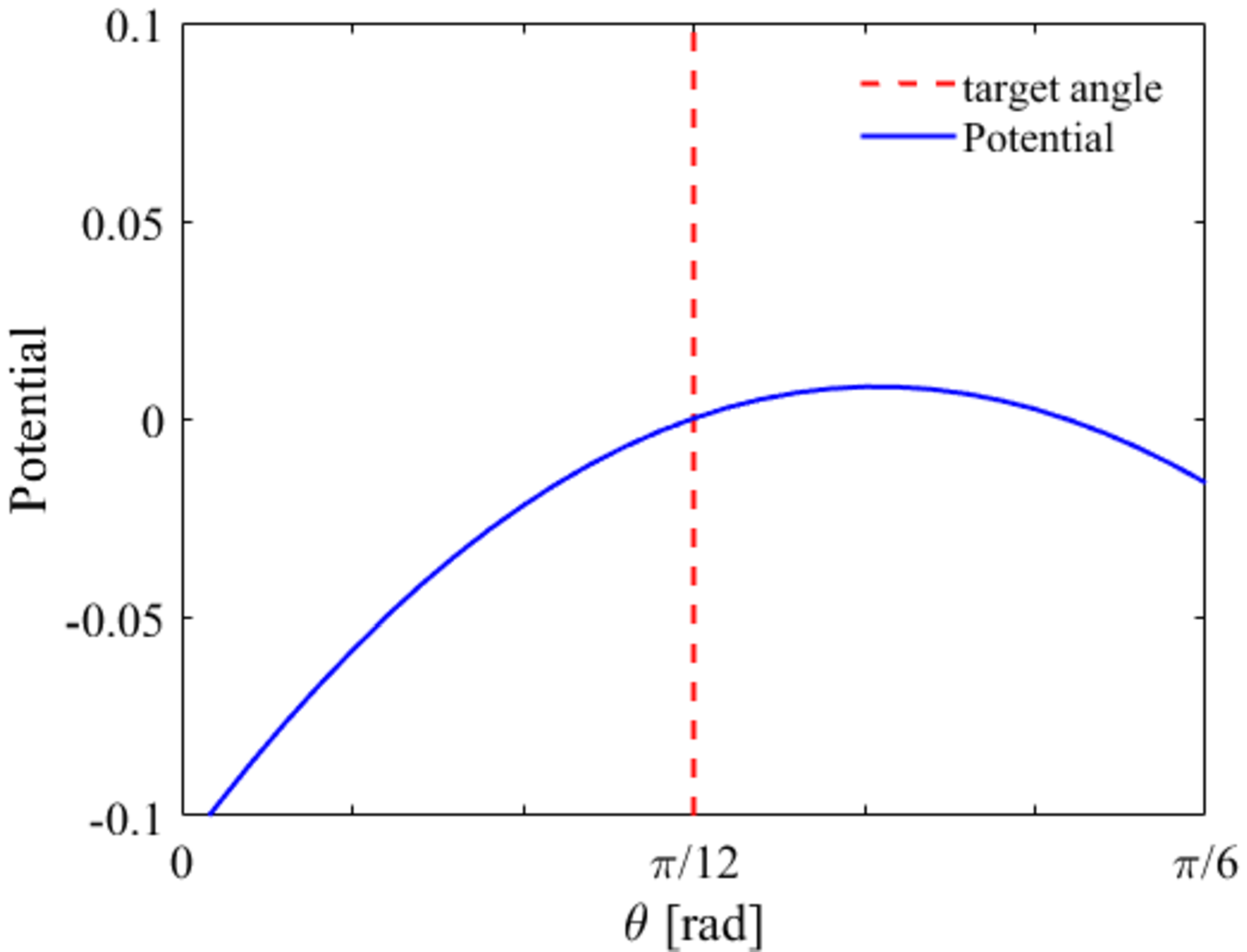}\vspace{-1.0mm}
		
	Potential field generated by the internal muscular force balancing at $\theta_d=\pi/12$
	\end{center}
 \end{minipage}
 \begin{minipage}{0.5\hsize}
	\begin{center}
		\includegraphics[keepaspectratio=true,height=50mm]{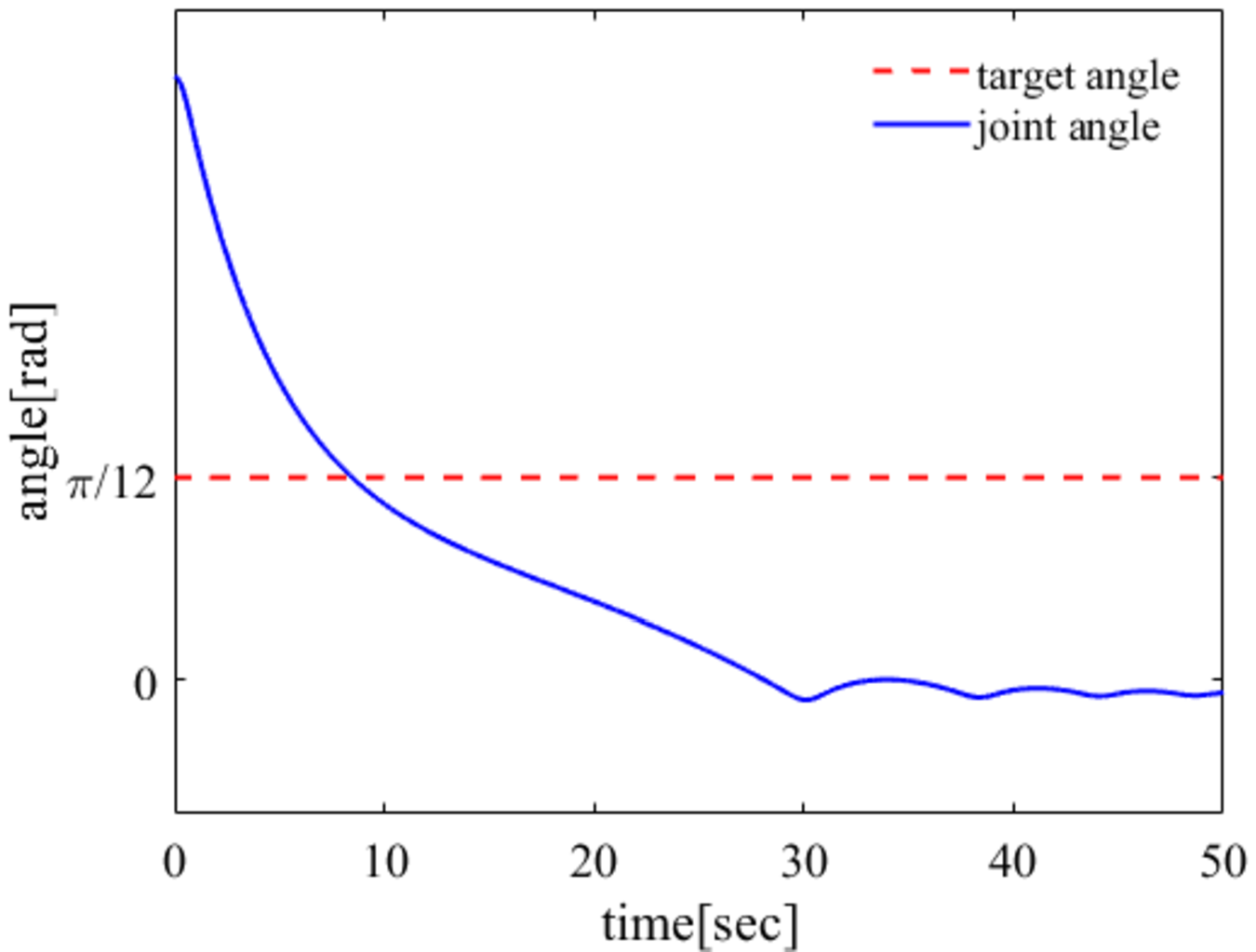}\vspace{-1.0mm}
		
		Motion behavior of the joint angle $\theta$ 
	\end{center}
 \end{minipage}
 \caption{Numerical simulation in an unstable case}
 \label{fig:sim2}
\end{figure}

\subsection{Experimental results}

We  verify a sufficient condition obtained in Theorem \ref{thm:sc} through an experiment with a real 1-link-2-muscle musculoskeletal system with routing points that is mechanically made (Figure \ref{fig:device}). 

\begin{figure}[!h]
\begin{minipage}{0.5\hsize}
\begin{center}
\includegraphics[keepaspectratio=true,height=50mm]{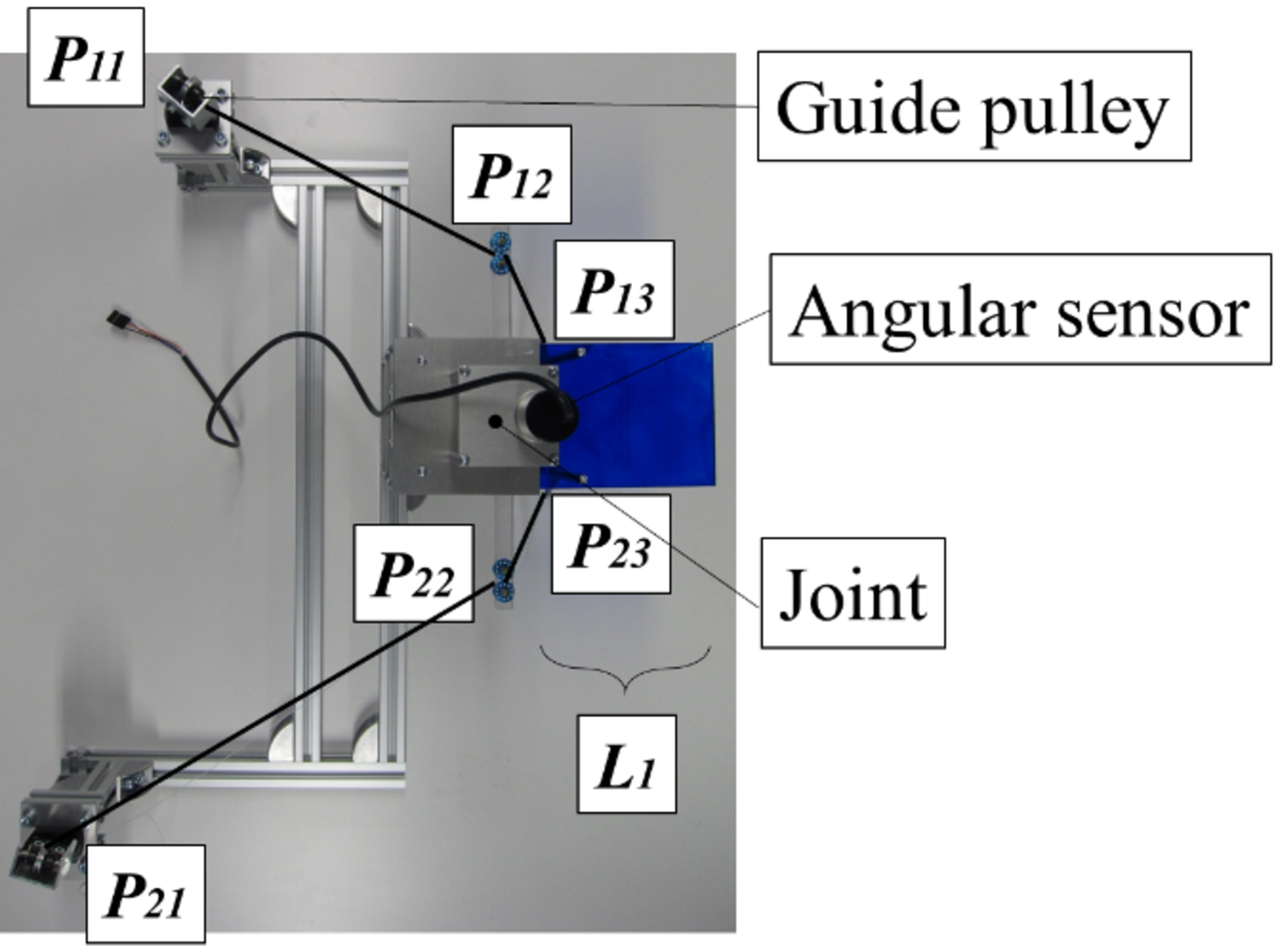}\vspace{-1.0mm}

Device
\end{center}
 \end{minipage}
 \begin{minipage}{0.5\hsize}
\begin{center}
\includegraphics[keepaspectratio=true,height=50mm]{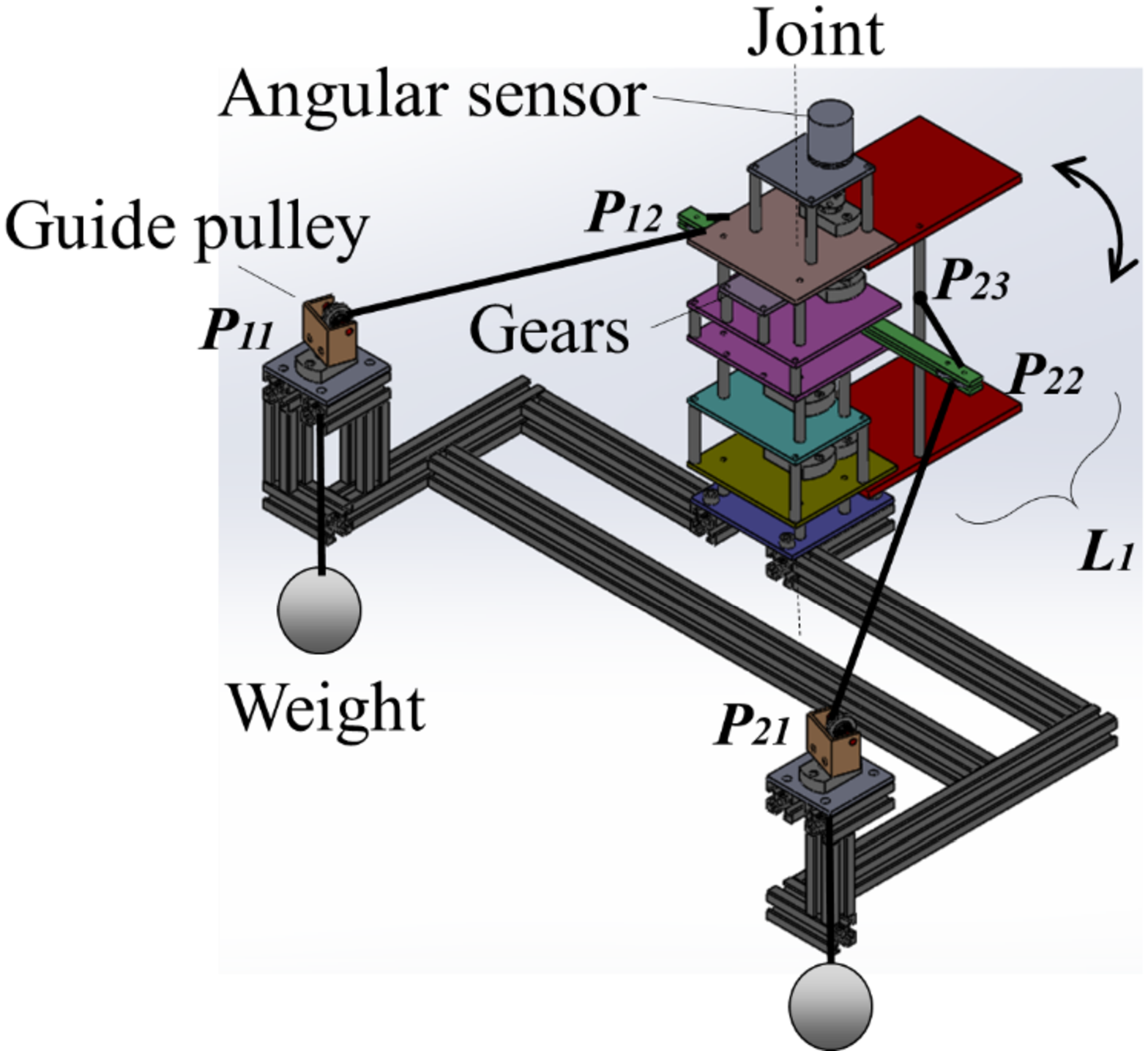}\vspace{-1.0mm}

Schematic diagram
\end{center}
 \end{minipage}
 \caption{Experiment device}
 \label{fig:device}
\end{figure}

The real system has a joint, and two fishing lines instead of muscles.
The link moves in a two-dimensional plane.
Thus, we can ignore gravity effects.
A rotary encoder is installed to measure the joint angle in the joint.
For the target system shown in Figure \ref{fig:target_system}, the rotation angle of the virtual links $\ell_1$ and $\ell_2$
 depends on the joint angle.
To accomplish this for the real system, we use several gears and replicate the virtual links.
The fishing lines pass to contact points $P_{11},P_{21}$ with the swing guide pulleys from the endpoints $P_{13},P_{23}$ of the link through the routing points $P_{12},P_{22}$, respectively.
The small pulleys of routing points $P_{12},P_{22}$ make the fishing lines smooth.
A weight at an endpoint of a fishing line generates the fishing line's tension.

We observe the motion behavior of the joint angle when we provide the real system the step input $v(\theta_d)=(v_{d1},v_{d2})$ balancing at a target angle $\theta_d$.
By changing the placement
of the swing pulleys, we can replace parameters $(d_1,d_2,b_1,b_2)$ by another.
However, $(s_1,s_2,r_1,r_2,\ell_1,\ell_2)$ cannot be changed.
We arrange the real systems in a stable case and an unstable case by replacing $(d_1,d_2)$.

Finally, we give motion behaviors of the joint angle through an experiment in a stable case based on Example \ref{example1} and an unstable case, as shown in Figure \ref{fig:exp1}.
We take the initial angle $\theta(0)=-5\pi/18$ [rad], the initial angular velocity $\dot\theta(0)=0$ [rad/s] and the target angle $\theta_d=\pi/12$ [rad]. 
Letting $\kappa\fallingdotseq 140/\sqrt2$ in Example \ref{example1}, we choose the parameters (Table \ref{table:exppara}).

\begin{figure}[h!]
\begin{minipage}{0.5\hsize}
\begin{center}
\includegraphics[keepaspectratio=true,height=50mm]{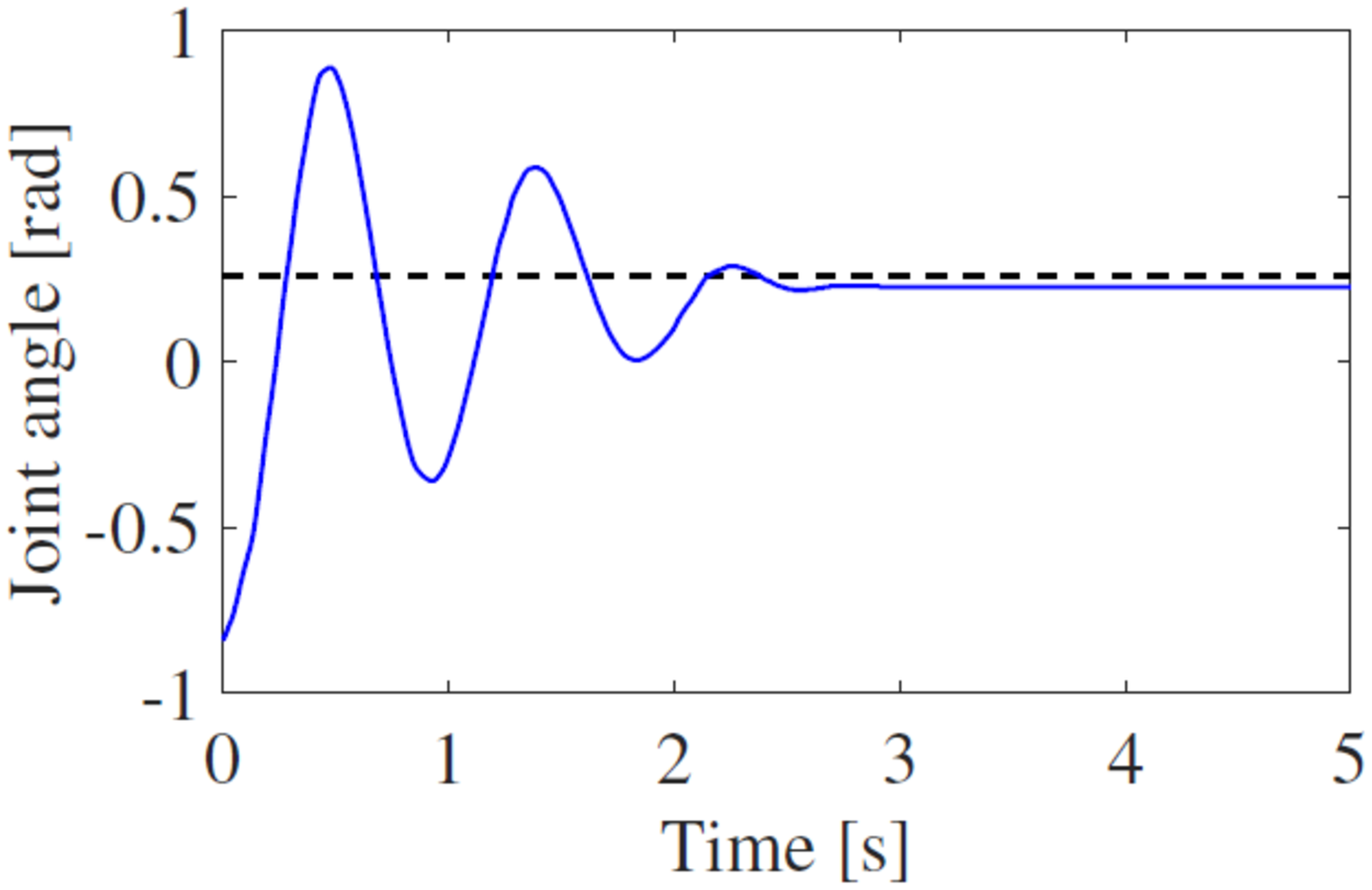}\vspace{-1.0mm}

Stable case
\end{center}
 \end{minipage}
 \begin{minipage}{0.5\hsize}
\begin{center}
\includegraphics[keepaspectratio=true,height=50mm]{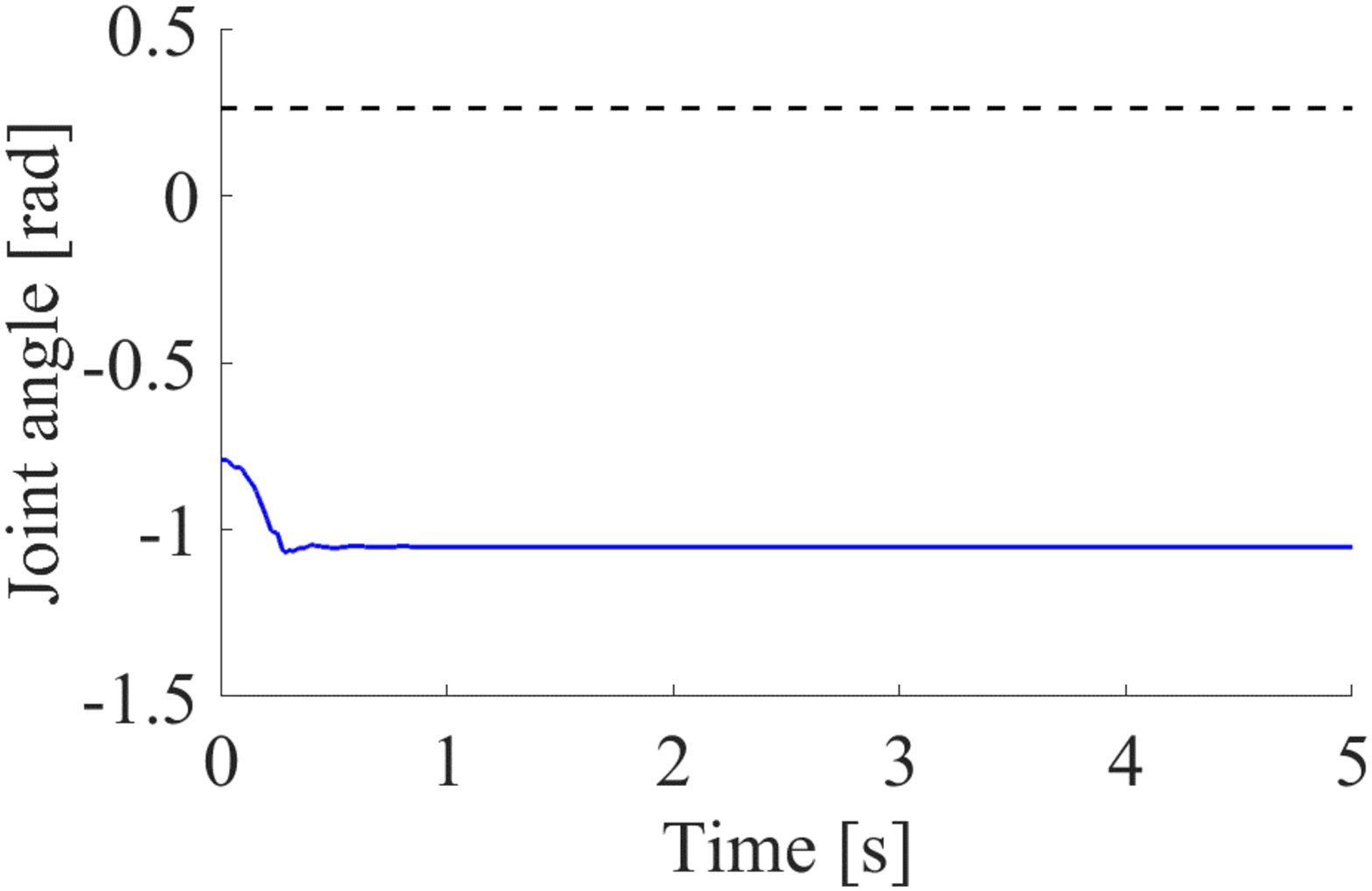}\vspace{-1.0mm}

Unstable case
\end{center}
 \end{minipage}
 \caption{Motion behavior of the joint angle}
 \label{fig:exp1}
\end{figure}

\begin{center}
\begin{table}[!h]
\begin{minipage}{0.3\hsize}
\centering
  \begin{tabular}{|c|r|} \hline
   $L_0$ & 285.0 [mm]\\ \hline 
   $L_1$ & 110.0 [mm]\\ \hline 
   $b_1$ & 87.0 [mm]\\ \hline 
   $b_2$ & 5.0 [mm]\\ \hline 
   $\ell_1$, $\ell_2$ & 99.0 [mm]\\ \hline 
   $r_1$, $r_2$ & 35.0 [mm]\\ \hline 
   $s_1$, $s_2$ & 35.0 [mm]\\ \hline 
  \end{tabular}\\[10pt]
  
  Common parameters
  \end{minipage}
  \begin{minipage}{0.3\hsize}
  \centering
  \begin{tabular}{|c|r|} \hline
   $d_1$ & 198.0 [mm]\\ \hline 
   $d_2$ & 280.0 [mm]\\ \hline 
   $v_{d1}$ & 7.84 [N]\\ \hline 
   $v_{d2}$ & 7.05 [N]\\ \hline 
   \end{tabular}\\[10pt]
   
     Stable case
     \end{minipage}
     \begin{minipage}{0.3\hsize}
       \centering
  \begin{tabular}{|c|r|} \hline
   $d_1$ & 15.0 [mm]\\ \hline 
   $d_2$ & 15.0 [mm]\\ \hline 
    $v_{d1}$ & 7.84 [N]\\ \hline 
   $v_{d2}$ & 7.63 [N]\\ \hline 
   \end{tabular}\\[10pt]
   
Unstable case
  \end{minipage}
   \caption{Experiment parameters}
    \label{table:exppara}
\end{table}
\end{center}

\end{document}